\def\beq{\begin{equation}}
\def\eeq{\end{equation}}
\def\ba{\begin{array}}
\def\ea{\end{array}}
\def\R{\mathbb R}
\newcommand{\I}{{\mathbf I}}
\newcommand{\K}{{\mathcal K}}
\newtheorem{thm}{Theorem}[section]
\newtheorem{lm}[thm]{Lemma}
\newtheorem{crl}[thm]{Corollary}
\theoremstyle{definition}
\newtheorem{rem}[thm]{Remark}
\newtheorem{df}[thm]{Definition}
\theoremstyle{remark}
\begin{document}
\pagestyle{plain}
\title{Solutions to discrete fractional Sch\"{o}dinger equations}


\author{Lidan Wang}
\email{lidanwang2023@126.com}
\address{Lidan Wang: School of Mathematical Sciences, Fudan University, Shanghai 200433, China}


\begin{abstract}
In this paper, we study the discrete fractional Schr\"{o}dinger equation $$ (-\Delta)^\alpha u+h(x) u=f(x,u),\quad x\in \mathbb{Z}^d,$$
 where $d\in\mathbb{N}^*,\,\alpha \in(0, 1)$ and the nonlocal operator $(-\Delta)^\alpha $ is defined by discrete Fourier transform, which differs from the continuous case. Under suitable assumptions on $h$ and $f$, we prove the existence and multiplicity of solutions to this equation by variational method.

\end{abstract}

 \maketitle

{\bf Keywords:} Discrete fractional Laplacian,  Discrete Fourier transform, Variational method.

{\bf Mathematics Subject Classification 2010:} 35A15, 35R11.

\section{Introduction}
 Given $\alpha\in(0,1),$ consider the fractional Sch\"{o}dinger equation
 \begin{eqnarray} \label{00}
 	(-\Delta)^\alpha u+h(x)u=f(x,u),\quad x\in\mathbb{R}^d,
  \end{eqnarray}
  where $(-\Delta)^\alpha$
is a nonlocal operator that we may define in several ways. For example, it can be defined as 
\begin{equation}\label{vj}
(-\Delta)^\alpha u(x)=C(\alpha,d) P.V.\int_{\mathbb{R}^d}\frac{u(x)-u(y)}{|x-y|^{d+2\alpha}}\,dy,\quad u\in\mathcal{S},	
\end{equation}
where $C(\alpha,d)$ is a normalized constant, $P.V.$ stands for the Cauchy principle value and $\mathcal{S}$ is the Schwartz space of rapidly decaying functions. Moreover, it can also be described by means of Fourier transform, that is
\begin{equation}\label{vh}
\mathcal{F}((-\Delta)^\alpha u)(\xi)=|\xi|^{2\alpha}\mathcal{F}((-\Delta)^\alpha u)(\xi),\quad \xi\in \mathbb{R}^d,	
\end{equation}
where $\mathcal{F}$ denotes the Fourier transform. We would like to mention that different definitions  of the fractional Laplace operator $(-\Delta)^\alpha$ are all equivalent. For an elementary introduction to the fractional Laplace operator and fractional
Sobolev space, we refer the interested readers to \cite{CLM,DP}.

The equation (\ref{00}) was first introduced by Laskin \cite{L,L0} as a result of expanding the Feynman
path integral from the Brownian-like to the L\'{e}vy-like quantum mechanical paths.  After that, many contributions
have appeared on the study of solutions to this equation. For example, if the operator $(-\Delta)^{\alpha}$ is given by the Fourier transform (\ref{vh}), 
under the  coercivity assumption on $h$, i.e. $h(x)\rightarrow+\infty$ as $|x|\rightarrow+\infty$, Cheng \cite{C} proved the  existence of ground state solutions to the equation (\ref{00}) with $f(x,u)=|u|^{p-2}u$ by Lagrange multiplier method. Later, Secchi \cite{S0} extended the results of Cheng \cite{C} to general nonlinearity  $f(x,u)$ which is differential
and $f$ satisfies Ambrosetti-Rabinowitz condition. It is worthwhile to
remark that in \cite{C} and \cite{S0} the coercivity hypothesis  is assumed on $h$ in order to overcome the problem
of lack of compactness, typical of elliptic problems defined in unbounded domains. For more works about the existence and multiplicity results to the equation (\ref{00}) obtained by variational and topological methods, we refer to \cite{CW,CW1,DA,DP1,FQT,ML,SZ,S1,WZ,ZX} and the references therein. 
  
 Nowadays, a great attention has been focused on the discrete fractional Laplace operators and related fractional problems.  In \cite{LR}, the authors defined the fractional Laplace operator $(-\Delta)^{\alpha}(0<\alpha<1)$ on $\mathbb{Z}^d$  with the semigroup method (see \cite{S2,ST}) as
\begin{equation}\label{vq}
(-\Delta)^{\alpha}u(x)=\frac{1}{\Gamma(-\alpha)}\int^{\infty}_{0}(e^{t\Delta}u(x)-u(x))\frac{dt}{t^{1+\alpha}},\quad u\in\ell^\infty(\mathbb{Z}^d).	
\end{equation}
Here $\Gamma$ represents the Gamma function, $v(x,t)=e^{t\Delta}u(x)$ is the solution of the semidiscrete heat equation
\begin{eqnarray*}
    \left\{
    \begin{array}{rcll}
       \partial_t v(x,t)&=&\Delta v(x,t),  & \text{in~}\mathbb{Z}^d\times(0,\infty), \\
        v(x,0)&=&u(x), & \text{on~}\mathbb{Z}^d,
    \end{array}
    \right.
\end{eqnarray*}
and 
$$\Delta u(x)=\sum\limits_{i=1}^d\left(u(x+e_i)-2u(x)+u(x-e_i)\right),$$
where $e_i$ denotes the unit vector in the positive direction of the $i$-th coordinate.
 Furthermore, if $d=1$, 
by the definition (\ref{vq}), Ciaurri et al. \cite{CR} established  the pointwise nonlocal formula for $(-\Delta)^{\alpha}$, i.e.
  \begin{equation}\label{vw}
  	(-\Delta)^{\alpha}u(x)=2\sum\limits_{x, y\in\mathbb{Z}, y\neq x}\left(u(x)-u(y)\right)K_\alpha(x-y),\quad u\in\ell_\alpha,
  \end{equation}
 where the discrete kernel $K_\alpha$ satisfies
 $$K_\alpha(0)=0,\quad \text{and}\quad \frac{c_\alpha}{|x|^{1+2\alpha}}\leq K_\alpha(x)\leq \frac{C_\alpha}{|x|^{1+2\alpha}},\quad x\in\mathbb{Z}\backslash\{0\},$$ 
  and $$\ell_\alpha=\left\{u:\mathbb{Z}\rightarrow\mathbb{R}|\sum\limits_{y\in\mathbb{Z}}\frac{|u(y)|}{(1+|y|^{1+2\alpha})}<+\infty\right\}.$$
  Actually, the formula (\ref{vw}) can be considered as a discrete analog of (\ref{vj}) with $d=1$.
  
 Based on the definition (\ref{vw}), Xiang and Zhang \cite{XZ} considered the nonlinear discrete fractional Schr\"{o}dinger equation 
 $$(-\Delta)^\alpha u+h(x)u=\lambda f(x,u),\quad x\in \mathbb{Z}.
$$
Under the coercivity assumption on $h$ ($h(x)\rightarrow+\infty$ as $|x|\rightarrow+\infty$) and suitable assumptions on the nonlinearity $f$,  the authors \cite{XZ} obtained two distinct nontrivial and nonnegative homoclinic solutions for large $\lambda$ by the mountain pass theorem and Ekeland's variational principle. Later, under weaker assumption on $h$, Ju and Zhang \cite{JZ} proved the multiplicity of homoclinic solutions to the above equation by Clark's theorem and its variants. Moreover, the same result in  \cite{XZ} was extended to the setting of the discrete fractional $p$-Laplacian by the same approach as in \cite{XZ}, see \cite{WT} for details. For more related works on integers, we refer readers to \cite{AB,CG,CL,HO,JD}.
  
  Recently, the authors in \cite{LM,LR} introduced the Fourier transform on lattice graph $\mathbb{Z}^d$, namely, for a given function $u\in\ell^1(\mathbb{Z}^d)$, the discrete Fourier transform
is defined by
 $$\mathcal{F}(u)(\theta)=\sum\limits_{x\in\mathbb{Z}^d} u(x)e^{ix\cdot\theta},\quad \theta\in[-\pi,\pi]^d,$$
and the inverse discrete Fourier transform is given by 
\begin{eqnarray*}
\mathcal{F}^{-1}(v)(x)=\frac{1}{(2\pi)^d}\int_{[-\pi,\pi]^d}v(\theta) e^{-ix\cdot\theta}\,d\theta,\quad x\in\mathbb{Z}^d,
\end{eqnarray*}
where $x\cdot\theta=\sum\limits_{i=1}^dx_i\theta_i.$  From the definitions above, one sees that the discrete Fourier transform $\mathcal{F}(u)$ is a function defined on $[-\pi,\pi]^d\subset\mathbb{R}^d$ whose Fourier coefficients are given by the sequence $\{u(x)\}_{x\in\mathbb{Z}^d}$, and the operator $u\mapsto\mathcal{F}(u)$  can be extended as an isometry from $\ell^2(\mathbb{Z}^d)$ into $L^2([-\pi,\pi]^d)$. 
Moreover, by \cite{LR}, the discrete fractional Laplacian $(-\Delta)^{\alpha}u$ can be characterized as
\begin{equation}\label{b1}
\mathcal{F}\left((-\Delta)^{\alpha}u\right)(\theta )=\left(\sum\limits_{i=1}^d 4\sin^2 (\theta_i/2)\right)^{\alpha}\mathcal{F}(u)(\theta),\quad \theta\in[-\pi,\pi]^d.
\end{equation}
As a consequence, by the convolution and uniqueness properties of the Fourier transform, we have
\begin{equation}\label{ve}
(-\Delta)^{\alpha}u(x)=\sum\limits_{y\in \mathbb{Z}^d} K^\alpha(x-y)u(y),\quad x\in\mathbb{Z}^d,
 \end{equation}
where $$K^\alpha(x):=\frac{1}{(2\pi)^d}\int_{[-\pi,\pi]^d}\left(\sum\limits_{i=1}^d 4\sin^2 (\theta_i/2)\right)^{\alpha}e^{-ix\cdot\theta}\,d\theta.$$
  
If the operator $(-\Delta)^{\alpha}$ is defined by the Fourier transform (\ref{b1}), to the best of our knowledge, there are no papers considering the discrete  nonlinear fractional Sch\"{o}dinger equations.
Hence motivated by the works above, in this paper, we study the discrete fractional Schr\"{o}dinger equation 
    \begin{equation}\label{01}
    (-\Delta)^\alpha u+h(x)u=f(x,u),\quad x\in \mathbb{Z}^d,    \end{equation}
where $(-\Delta)^{\alpha}u$ is defined by the Fourier transform (\ref{b1}). We always assume that 
\begin{enumerate}
\item[($h_1$)] $c_1\leq h(x)\leq c_2$ for $x\in\mathbb{Z}^d,$ where $c_1$ and $c_2$ are positive constants;
\item[($f_1$)] $f:\mathbb{Z}^d\times \R\rightarrow \R$ is continuous with respect to $u\in\R$;

\item[($f_2$)] $
|f(x,u)|\leq a(1+|u|^{p-1})$ for some $a>0$ and $p>2$;

\item[($f_3$)]  $f(x,u)=o(u)$ uniformly in $x$ as $|u|\rightarrow 0$;
\item[($f_4$)] $u\mapsto \frac{f(x,u)}{|u|}$ is increasing  on $(-\infty, 0)$ and $(0, +\infty)$;
\item[($f_5$)] $\frac{F(x,u)}{u^2}\rightarrow+\infty$ uniformly in $x$ as $|u|\rightarrow +\infty$ with $F(x,u)=\int_{0}^{u}f(x,t)\,dt$;
\item[($f_6$)]  $h$ and $f$ are $1$-periodic in $x_i,\,1\leq i\leq d.$ 
\end{enumerate}
Clearly, by ($f_1$), ($f_2$) and ($f_3$), for any $\varepsilon>0$, there exists $c_\varepsilon>0$ such that
\begin{equation}\label{nn}
|f(x,u)|\leq\varepsilon|u|+c_\varepsilon|u|^{p-1},\quad u\in\R.
\end{equation}
Moreover, by ($f_3$) and ($f_4$), we have 
\begin{equation}\label{30}
f(x,u)u>2 F(x, u)> 0, \quad u\neq0.
\end{equation}
A simple case of $f$ satisfying $(f_1)$-$(f_5)$ is the function $f(x,u)=|u|^{p-2}u.$ Now we sate  our main results as follows.

\begin{thm}\label{t-0} 

Let $(h_1)$ and $(f_1)$-$(f_6)$ be satisfied. Then there exists a ground state solution to the equation (\ref{01}).
\end{thm}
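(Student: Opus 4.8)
The plan is to recast (\ref{01}) variationally on the Hilbert space $H=\ell^2(\mathbb{Z}^d)$ equipped with the inner product $\langle u,v\rangle=\langle(-\Delta)^\alpha u,v\rangle+\sum_{x}h(x)u(x)v(x)$ and norm $\|u\|^2=\langle(-\Delta)^\alpha u,u\rangle+\sum_x h(x)u(x)^2$. The decisive structural observation in the discrete setting is that, by Plancherel applied to (\ref{b1}), the quadratic form $\langle(-\Delta)^\alpha u,u\rangle=\frac{1}{(2\pi)^d}\int_{[-\pi,\pi]^d}(\sum_i 4\sin^2(\theta_i/2))^\alpha|\mathcal F(u)(\theta)|^2\,d\theta$ has a \emph{bounded} Fourier symbol, $0\le(\sum_i 4\sin^2(\theta_i/2))^\alpha\le(4d)^\alpha$; combined with $(h_1)$ this gives $c_1\|u\|_{\ell^2}^2\le\|u\|^2\le(c_2+(4d)^\alpha)\|u\|_{\ell^2}^2$, so $\|\cdot\|$ is equivalent to the standard $\ell^2$ norm and $H=\ell^2(\mathbb{Z}^d)$. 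In particular the embeddings $H\hookrightarrow\ell^q(\mathbb{Z}^d)$ are continuous for every $q\ge2$ (since $\ell^2\subset\ell^q$ on the lattice), which supplies all the Sobolev-type inequalities needed below. Using $(f_1)$--$(f_3)$ one checks that $I(u)=\tfrac12\|u\|^2-\sum_x F(x,u(x))$ is of class $C^1$ with $\langle I'(u),v\rangle=\langle u,v\rangle-\sum_x f(x,u(x))v(x)$, so weak solutions of (\ref{01}) are exactly the critical points of $I$.

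Next I would establish the minimax geometry and set up the Nehari manifold. From (\ref{nn}) and the embeddings one gets $I(u)\ge\tfrac14\|u\|^2-C\|u\|^p$, so since $p>2$ there are $\rho,\delta>0$ with $I\ge\delta$ on $\{\|u\|=\rho\}$; the superquadratic condition $(f_5)$ gives $I(tu)\to-\infty$ as $t\to+\infty$ for each fixed $u\ne0$. Condition $(f_4)$ is precisely the monotonicity hypothesis of the Szulkin--Weth theory: it guarantees that for every $u\ne0$ the map $t\mapsto I(tu)$ has a unique positive critical point $t_u$, which is its global maximum, so the Nehari manifold $\mathcal N=\{u\ne0:\langle I'(u),u\rangle=0\}$ is a natural constraint and the ground state energy is $c=\inf_{\mathcal N}I=\inf_{u\ne0}\max_{t>0}I(tu)$. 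A short computation using (\ref{nn}) shows $\mathcal N$ is bounded away from $0$, and comparing $I(u)$ for $u\in\mathcal N$ with the value of $I$ on the sphere of radius $\rho$ gives $c\ge\delta>0$. Minimizing the reduced functional over the unit sphere (or applying Ekeland's principle) produces a sequence $u_n\in\mathcal N$ with $I(u_n)\to c$ and $I'(u_n)\to0$, whose boundedness follows from $(f_5)$ together with the discrete vanishing lemma described below.

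The heart of the argument is recovering compactness, which is where periodicity enters, since neither coercivity of $h$ nor a compact embedding is available. The needed tool is a discrete vanishing lemma: if $\{u_n\}$ is bounded in $\ell^2(\mathbb{Z}^d)$ and $\|u_n\|_{\ell^\infty}\to0$, then $\|u_n\|_{\ell^p}\to0$ for every $p>2$; this is immediate from the interpolation $\|u_n\|_{\ell^p}^p\le\|u_n\|_{\ell^\infty}^{p-2}\|u_n\|_{\ell^2}^2$, and is in fact cleaner than its continuous counterpart. If the minimizing sequence vanished in $\ell^\infty$, the Nehari identity $\|u_n\|^2=\sum_x f(x,u_n)u_n$ together with (\ref{nn}) and the lemma would force $\|u_n\|\to0$, contradicting $c>0$; hence there are $\delta_0>0$ and points $x_n\in\mathbb{Z}^d$ with $|u_n(x_n)|\ge\delta_0$. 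Now I would invoke $(f_6)$: since $h$ and $f$ are $1$-periodic in each coordinate, translation by the integer vector $x_n$ is an isometry of $H$ preserving $I$, $I'$, and $\mathcal N$, so $\tilde u_n:=u_n(\cdot+x_n)$ is again a bounded Palais--Smale sequence at level $c$ with $|\tilde u_n(0)|\ge\delta_0$.

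Finally I would pass to the limit. Extracting a subsequence, $\tilde u_n\rightharpoonup u$ weakly in $\ell^2$; since point evaluations are continuous on $\ell^2(\mathbb{Z}^d)$, weak convergence entails pointwise convergence, whence $|u(0)|\ge\delta_0$ and $u\ne0$. Weak-to-weak continuity of $I'$ in this discrete setting is elementary: for any finitely supported $v$, pointwise convergence and $(f_1)$ give $\sum_x f(x,\tilde u_n)v\to\sum_x f(x,u)v$ while $\langle\tilde u_n,v\rangle\to\langle u,v\rangle$, so $\langle I'(u),v\rangle=\lim\langle I'(\tilde u_n),v\rangle=0$, and by density $I'(u)=0$; thus $u$ is a nontrivial critical point and $u\in\mathcal N$, giving $I(u)\ge c$. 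For the reverse inequality I would use Fatou's lemma: by (\ref{30}) the summand $\tfrac12 f(x,\cdot)\cdot-F(x,\cdot)$ is nonnegative, so $c=\lim[I(\tilde u_n)-\tfrac12\langle I'(\tilde u_n),\tilde u_n\rangle]=\lim\sum_x(\tfrac12 f(x,\tilde u_n)\tilde u_n-F(x,\tilde u_n))\ge\sum_x(\tfrac12 f(x,u)u-F(x,u))=I(u)$. Hence $I(u)=c$ and $u$ is the desired ground state. I expect the main obstacle to be exactly this compactness step: making the dichotomy between vanishing and non-vanishing rigorous and verifying that the translated weak limit is both nontrivial and energy-minimizing; once periodicity is used to relocate the concentrating mass near the origin, the discrete nature of the problem renders the remaining pointwise and Fatou arguments comparatively painless.
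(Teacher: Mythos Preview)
Your proposal is correct and follows essentially the same route as the paper: you establish the equivalence of $\|\cdot\|$ with the $\ell^2$ norm via the bounded Fourier symbol, set up the Szulkin--Weth Nehari framework using $(f_4)$, obtain a bounded Palais--Smale sequence at level $c$ (the paper packages this as coercivity of $I$ on $\mathcal{M}$, proved by the same contradiction via $(f_5)$ and the vanishing lemma that you allude to), invoke the discrete Lions-type interpolation $\|u_n\|_{\ell^p}^p\le\|u_n\|_{\ell^\infty}^{p-2}\|u_n\|_{\ell^2}^2$ to rule out vanishing, translate by periodicity, and conclude with Fatou applied to $\tfrac12 f(x,u)u-F(x,u)\ge0$. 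The only cosmetic difference is that the paper first splits into the cases $u\neq0$ and $u=0$ before translating, whereas you go straight to the vanishing/non-vanishing dichotomy; your streamlined version covers both cases at once and is equally valid.
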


By $(f_6)$, if $u$ is a solution of the equation (\ref{01}), then so is $u(\cdot-y)$ for all $y\in\mathbb{Z}^d$.
Let $$\mathcal{O}(u)=\{u(\cdot-y):y\in\mathbb{Z}^d\}.$$
$\mathcal{O}(u)$ is called the orbit of $u$ under the action of $\mathbb{Z}^d.$ Let $u_1$ and $u_2$ be two solutions of the equation (\ref{01}), we shall call $u_1$ and $u_2$ are geometrically
distinct if $\mathcal{O}(u_1)$ and $\mathcal{O}(u_2)$ are disjoint.

\begin{thm}\label{t-1}
Let $(h_1)$ and $(f_1)$-$(f_6)$ be satisfied. If $f(x,-u)=-f(x,u)$ for $(x,u)\in\mathbb{Z}^d\times\mathbb{R}$, then the equation (\ref{01})	admits infinitely many pairs of geometrically distinct solutions in $H^\alpha$.
\end{thm}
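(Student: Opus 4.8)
The plan is to treat (\ref{01}) variationally and to exploit both the oddness of $f$ and the $\mathbb{Z}^d$-periodicity through a genus-based minimax argument on the Nehari manifold, in the spirit of the Szulkin--Weth method for periodic problems. On $H^\alpha$ I would work with the norm $\|u\|^2=\langle(-\Delta)^\alpha u,u\rangle_{\ell^2}+\sum_{x\in\mathbb{Z}^d}h(x)u(x)^2$, which by $(h_1)$ and (\ref{b1}) is equivalent to the standard one, and with the energy functional $I(u)=\tfrac12\|u\|^2-\sum_{x\in\mathbb{Z}^d}F(x,u(x))$. Using (\ref{nn}) and the elementary continuous embeddings $\ell^2(\mathbb{Z}^d)\hookrightarrow\ell^p(\mathbb{Z}^d)$ for all $p\ge 2$, one checks that $I\in C^1(H^\alpha,\R)$ and that its nontrivial critical points are exactly the solutions of (\ref{01}). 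The point of the hypotheses in Theorem \ref{t-1} is that $(f_6)$ makes $I$ invariant under the translations $u\mapsto u(\cdot-y)$, $y\in\mathbb{Z}^d$, while $f(x,-u)=-f(x,u)$ makes $I$ even.

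Next I would transport the problem to the unit sphere via the Nehari manifold. Conditions $(f_3)$--$(f_5)$, together with the consequence (\ref{30}), guarantee that for each $u\neq0$ the function $t\mapsto I(tu)$, $t>0$, has a unique positive maximum, so that $\mathcal{N}=\{u\neq0:\langle I'(u),u\rangle=0\}$ is a $C^1$ manifold meeting every ray through the origin exactly once, and $u\mapsto t(u)u$ defines a homeomorphism $m$ from the unit sphere $S=\{u:\|u\|=1\}$ onto $\mathcal{N}$. The reduced functional $\Phi:=I\circ m\in C^1(S,\R)$ is then even and $\mathbb{Z}^d$-invariant, its critical points correspond through $m$ to the nontrivial critical points of $I$, and $\inf_S\Phi$ equals the ground state level $c>0$ furnished by Theorem \ref{t-0}.

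The compactness input is a discrete splitting lemma. Since $\ell^2(\mathbb{Z}^d)\hookrightarrow\ell^p(\mathbb{Z}^d)$ for $p\ge2$, bounded sequences in $H^\alpha$ admit subsequences converging pointwise on $\mathbb{Z}^d$; a bounded Palais--Smale (or Cerami) sequence for $I$ that does not converge strongly can, using $(f_6)$, be translated so as not to vanish, and then yields in the pointwise limit a nontrivial solution. Iterating this alternative and using that $(f_4)$--$(f_5)$ force every nontrivial solution to carry energy $\ge c$, one finds that each such sequence decomposes, up to translations, into finitely many translates of solutions. In particular, if (\ref{01}) had only finitely many geometrically distinct solutions, the set of critical values would be discrete and $\Phi$ would satisfy the $(PS)$ condition on $S$.

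The multiplicity then follows by contradiction. Assuming only finitely many geometrically distinct solutions, the $(PS)$ condition for the even functional $\Phi$ just obtained lets me run Lusternik--Schnirelmann theory with the Krasnoselskii genus $\gamma$: the minimax values $c_k=\inf_{\gamma(A)\ge k}\sup_{u\in A}\Phi(u)$ are critical values with $c_k\to+\infty$, the divergence coming from the superquadratic growth $(f_5)$ and the infinite-dimensionality of $H^\alpha$, while the genus rules out the levels being realized by a finite union of orbits. This produces infinitely many geometrically distinct solutions, contradicting the finiteness assumption. I expect the main obstacle to be precisely this compactness step: proving the discrete splitting lemma in the fractional setting defined through (\ref{b1}) and turning ``finitely many orbits'' into a genuine $(PS)$ condition for $\Phi$, since it is here that the loss of compactness caused by the $\mathbb{Z}^d$-action must be controlled and the concentration--compactness iteration made rigorous.
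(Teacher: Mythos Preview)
Your overall architecture---reduce to the unit sphere via the Nehari map $m$, argue by contradiction assuming finitely many orbits, and run a genus-based minimax---matches the paper's. The gap is in your compactness step: the claim that under the finiteness assumption $\Phi$ satisfies the Palais--Smale condition on $S$ is false, not merely hard. Translation invariance obstructs it outright: if $v$ is any nontrivial solution, then $w_n = m^{-1}\bigl(v(\cdot - n e_1)\bigr)$ is a $(PS)$ sequence for $\Phi$ that converges weakly to $0\notin S$ and has no convergent subsequence in $S$. A splitting lemma would give you the profile decomposition of a bounded $(PS)$ sequence, but the translates still escape to infinity, so you never recover genuine $(PS)$, and the standard Lusternik--Schnirelmann package does not apply as stated.

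The paper (following Szulkin--Weth \cite{SW1}) replaces $(PS)$ by something weaker but sufficient: a \emph{discreteness} property of $(PS)$ sequences. Under the finiteness hypothesis, any two $(PS)$ sequences $\{w_n^1\},\{w_n^2\}$ in a common sublevel set either satisfy $\|w_n^1-w_n^2\|_\alpha\to 0$ or stay uniformly bounded apart by some $\omega(r)>0$ (Lemma~\ref{l88}). This, together with a positive lower bound on the mutual distances of distinct critical points (Lemma~\ref{l98}), is enough to show that every trajectory of the pseudo-gradient flow on $S_1$ converges to a critical point (Lemma~\ref{lv}) and to prove a tailored deformation lemma (Lemma~\ref{lg}). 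The genus argument is then run with the sublevel-set values $c_l=\inf\{r:\gamma(\Psi^r)\ge l\}$ and yields $c_l<c_{l+1}$ for all $l$, contradicting the finiteness of the set of critical values. So the idea you are missing is to abandon the $(PS)$ target altogether and to build the deformation theory directly from this discreteness alternative.
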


This paper is organized as follows. In Section 2, we recall some basic facts and give some useful lemmas. In Section 3, we prove the existence of ground state solutions to the equation (\ref{01}) by the method of  Nehari manifold developed by Szulkin and Weth \cite{SW}.  In Section 4,  we follow the ideas of Szulkin and Weth \cite{SW1} to prove the multiplicity of solutions to the equation (\ref{01}).

\section{Preliminaries}

In this section, we introduce some settings on graphs and give some useful lemmas.

Let $G=(V,E)$ be a connected, locally finite graph, where $V$ denotes the vertex set and $E$ denotes the edge set. We call vertices $x$ and $y$ neighbors, denoted by $x\sim y$, if there is an edge connecting them, i.e. $(x,y)\in E$.
For any $x,y\in V$, the distance $|x-y|$ is defined as the minimum number of edges connecting $x$ and $y$, i.e.
$$|x-y|=\inf\{k:x=x_0\sim\cdots\sim x_k=y\}.$$

In this paper, we consider the natural discrete model of the Euclidean space, the integer lattice graph. The $d$-dimensional integer lattice graph, denoted by $\mathbb{Z}^d$, consists of the set of vertices $V=\mathbb{Z}^d$ and the set of edges $E=\{(x,y): x,\,y\in V,\,\underset {{i=1}}{\overset{d}{\sum}}|x_{i}-y_{i}|=1\}.$

 We denote the space of real functions on $\mathbb{Z}^d$ by $C(\mathbb{Z}^d)$. 
For any $u\in C(\mathbb{Z}^d)$, the $\ell^p(\mathbb{Z}^d)$ space is given by
$$\ell^s(\mathbb{Z}^d)=\{u\in C(\mathbb{Z}^d):\|u\|_{s}<+\infty\},\qquad s\in[1,+\infty],$$
 where 
 $$\|u\|_{s}=(\sum\limits_{x\in\mathbb{Z}^d}|u(x)|^s)^{\frac{1}{s}},\quad s\in[1,+\infty),\quad \text{and}\quad \|u\|_{\infty}=\underset {x\in\mathbb{Z}^d}{\sup}|u(x)|,\quad s=+\infty.$$
 
As stated in Introduction, the discrete fractional Laplacian can be characterized as
\begin{eqnarray*}
\mathcal{F}\left((-\Delta)^{\alpha}u\right)(\theta )=\left(\sum\limits_{i=1}^d 4\sin^2 (\theta_i/2)\right)^{\alpha}\mathcal{F}(u)(\theta),\quad \theta\in[-\pi,\pi]^d.
\end{eqnarray*}
For convenience, in the following, we denote
$$\Phi(\theta):=\left(\sum\limits_{i=1}^d 4\sin^2 (\theta_i/2)\right)\quad \text{and}\quad\hat{u}:=\mathcal{F}(u).$$

\begin{lm}\label{ok}
If $u\in\ell^2(\mathbb{Z}^d)$, then $[u]_\alpha<+\infty.$ Moreover, we have
$$[u]_\alpha\leq C(\alpha,d)\|u\|_2,$$
where $$[u]^2_\alpha=:\frac{1}{(2\pi)^d}\int_{[-\pi,\pi]^d}(\Phi(\theta))^{\alpha}|\hat{u}(\theta)|^2\,d\theta.$$
	
\end{lm}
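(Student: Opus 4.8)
The plan is to exploit two facts: the uniform boundedness of the Fourier multiplier $\Phi(\theta)^\alpha$ on the cube $[-\pi,\pi]^d$, and the Plancherel identity furnished by the $L^2$-isometry of the discrete Fourier transform recorded in the Introduction. Since the quantity $[u]_\alpha^2$ is literally the $L^2$-norm of $\Phi(\theta)^{\alpha/2}\hat u(\theta)$ (up to the normalization constant), the whole statement reduces to a one-line multiplier bound once the symbol is controlled.

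First I would bound the symbol pointwise. Because $0 \le 4\sin^2(\theta_i/2) \le 4$ for every $\theta_i \in [-\pi,\pi]$, summing over $i$ gives $0 \le \Phi(\theta) \le 4d$ uniformly on $[-\pi,\pi]^d$. As $\alpha \in (0,1)$ and $t \mapsto t^\alpha$ is increasing on $[0,\infty)$, this yields the uniform estimate $\Phi(\theta)^\alpha \le (4d)^\alpha$.

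Next, recalling that the isometry property gives the Plancherel identity $\frac{1}{(2\pi)^d}\int_{[-\pi,\pi]^d}|\hat u(\theta)|^2\,d\theta = \|u\|_2^2$, I would insert the pointwise bound into the definition of $[u]_\alpha^2$ and factor out the constant:
$$[u]_\alpha^2 = \frac{1}{(2\pi)^d}\int_{[-\pi,\pi]^d}\Phi(\theta)^\alpha|\hat u(\theta)|^2\,d\theta \le (4d)^\alpha\cdot\frac{1}{(2\pi)^d}\int_{[-\pi,\pi]^d}|\hat u(\theta)|^2\,d\theta = (4d)^\alpha\|u\|_2^2.$$
Finiteness of $[u]_\alpha$ is then immediate, since for $u\in\ell^2(\mathbb{Z}^d)$ the right-hand side is finite; taking square roots gives the claimed inequality with the explicit constant $C(\alpha,d)=(4d)^{\alpha/2}$.

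There is essentially no serious obstacle here, as the lemma is a direct multiplier estimate rather than a delicate analytic statement. The only point deserving a little care is the normalization in the Plancherel identity: one must confirm that the convention attached to the stated isometry produces $\|u\|_2^2 = (2\pi)^{-d}\|\hat u\|_{L^2}^2$ rather than a differently-scaled version. With the Fourier pair fixed as in the Introduction this is the standard relation, and in any case any fixed scaling factor is harmlessly absorbed into $C(\alpha,d)$.
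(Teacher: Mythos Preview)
Your proof is correct and follows exactly the same approach as the paper: bound the Fourier multiplier $\Phi(\theta)^\alpha$ uniformly on the cube and then apply Plancherel's identity. The only difference is that you make the constant $(4d)^{\alpha/2}$ and the normalization in Plancherel explicit, whereas the paper compresses the argument into a single displayed inequality with an unspecified $C(\alpha,d)$.
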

\begin{proof}
For $u\in\ell^2(\mathbb{Z}^d)$, by  Plancherel's identity, we have
\begin{eqnarray*}
[u]^2_\alpha\leq C(\alpha,d)\|\hat{u}\|^2_{L^2([-\pi,\pi]^d)}=C(\alpha,d)\|u\|^2_2.
\end{eqnarray*}
\end{proof}
\begin{crl}\label{oo}
If $u\in\ell^2(\mathbb{Z}^d)$, then we have $(-\Delta)^\alpha u\in\ell^2(\mathbb{Z}^d)$ with
$$\|(-\Delta)^\alpha u\|_2\leq C(\alpha, d) \|u\|_2.$$	
\end{crl}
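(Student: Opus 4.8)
The plan is to imitate the proof of Lemma \ref{ok}, the decisive observation being that the Fourier symbol of $(-\Delta)^\alpha$ is uniformly bounded on the torus. Since $\sin^2(\theta_i/2)\le 1$ for each $i$, one has $0\le\Phi(\theta)\le 4d$ for all $\theta\in[-\pi,\pi]^d$, and therefore $(\Phi(\theta))^{2\alpha}\le(4d)^{2\alpha}$. This is precisely where the discrete setting parts ways with the continuous one: in $\R^d$ the symbol $|\xi|^{2\alpha}$ is unbounded, so $(-\Delta)^\alpha$ does not map $L^2$ into $L^2$, whereas on $\mathbb{Z}^d$ the boundedness of $\Phi$ makes $(-\Delta)^\alpha$ a bounded operator on $\ell^2$.

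Concretely, I would proceed in three steps. First, apply the defining relation (\ref{b1}) to write $\mathcal{F}\left((-\Delta)^\alpha u\right)(\theta)=(\Phi(\theta))^\alpha\hat{u}(\theta)$, so that $\left|\mathcal{F}\left((-\Delta)^\alpha u\right)(\theta)\right|^2=(\Phi(\theta))^{2\alpha}|\hat{u}(\theta)|^2$. Second, invoke the pointwise bound above to estimate
\begin{equation*}
\frac{1}{(2\pi)^d}\int_{[-\pi,\pi]^d}\left|\mathcal{F}\left((-\Delta)^\alpha u\right)(\theta)\right|^2\,d\theta\le(4d)^{2\alpha}\,\frac{1}{(2\pi)^d}\int_{[-\pi,\pi]^d}|\hat{u}(\theta)|^2\,d\theta.
\end{equation*}
Third, apply Plancherel's identity on both sides---to $(-\Delta)^\alpha u$ on the left and to $u$ on the right---so that the two integrals become $\|(-\Delta)^\alpha u\|_2^2$ and $\|u\|_2^2$, yielding $\|(-\Delta)^\alpha u\|_2\le(4d)^{\alpha}\|u\|_2$. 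Setting $C(\alpha,d)=(4d)^{\alpha}$ gives the stated bound. Note that this is literally the computation of Lemma \ref{ok} with the exponent $\alpha$ replaced by $2\alpha$, the replacement being harmless precisely because $\Phi$ is bounded.

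There is no genuine obstacle here; the only assertion deserving a word is the membership $(-\Delta)^\alpha u\in\ell^2(\mathbb{Z}^d)$, which must be established before the norm identity can be read off. Since $(\Phi(\theta))^\alpha\hat{u}(\theta)$ lies in $L^2([-\pi,\pi]^d)$ by the displayed estimate, its inverse discrete Fourier transform has square-summable coefficients, and by the uniqueness of the Fourier transform this inverse transform coincides with the series (\ref{ve}) defining $(-\Delta)^\alpha u$. Hence $(-\Delta)^\alpha u\in\ell^2(\mathbb{Z}^d)$, and the norm estimate follows from Plancherel as above.
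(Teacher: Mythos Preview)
Your proof is correct and is precisely what the paper intends: the corollary is stated in the paper without an explicit proof, the understanding being that the argument of Lemma~\ref{ok} applies verbatim with the exponent $\alpha$ in the symbol replaced by $2\alpha$, which is exactly what you do. Your additional remark justifying the membership $(-\Delta)^\alpha u\in\ell^2(\mathbb{Z}^d)$ via the inverse Fourier transform is a welcome clarification that the paper leaves implicit.
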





Let 
$$H^\alpha=\left\{u\in\ell^2(\mathbb{Z}^d):\frac{1}{(2\pi)^d}\int_{[-\pi,\pi]^d}(\Phi(\theta))^{\alpha}|\hat{u}(\theta)|^2\,d\theta+\sum\limits_{x\in\mathbb{Z}^d}h(x)|u(x)|^2<+\infty\right\}$$ 
endowed with the norm
$$\|u\|_\alpha=\left([u]^2_\alpha+\sum\limits_{x\in\mathbb{Z}^d}h(x)|u(x)|^2\right)^{\frac{1}{2}}.$$
Moreover, if $H^\alpha$ is equipped with the following inner product
$$(u,v)_\alpha=\frac{1}{(2\pi)^d}\int_{[-\pi,\pi]^d}(\Phi(\theta))^{\alpha}\hat{u}(\theta)\overline{\hat{v}(\theta)}\,d\theta+\sum\limits_{x\in\mathbb{Z}^d}h(x)u(x)v(x),\quad u,v\in H^\alpha,$$
then $H^\alpha$ is a Hilbert space. Now we give a simple proof.

\begin{lm}\label{ou}
Let $(h_1)$ hold. Then $H^\alpha$ is a Hilbert space.	
\end{lm}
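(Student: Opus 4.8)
The plan is to show that $H^\alpha$ is complete with respect to the norm $\|\cdot\|_\alpha$, since it is already clear from the definition of $(u,v)_\alpha$ that this inner product is well-defined (by the Cauchy--Schwarz inequality and Lemma \ref{ok}, both terms are finite for $u,v\in H^\alpha$), bilinear, symmetric, and positive definite. Positive definiteness uses $(h_1)$: since $h(x)\geq c_1>0$, the second term $\sum_x h(x)|u(x)|^2$ already dominates $c_1\|u\|_2^2$, so $\|u\|_\alpha=0$ forces $u=0$. Thus the only real content is completeness.

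First I would take a Cauchy sequence $\{u_n\}$ in $(H^\alpha,\|\cdot\|_\alpha)$ and extract a candidate limit. The key observation is that $(h_1)$ makes $\|\cdot\|_\alpha$ comparable to a sum of two manageable pieces. Indeed, from $h(x)\geq c_1$ we get $c_1\|u\|_2^2\leq\sum_x h(x)|u(x)|^2\leq\|u\|_\alpha^2$, so $\|u_n-u_m\|_2\to 0$; since $\ell^2(\mathbb{Z}^d)$ is complete, there is $u\in\ell^2(\mathbb{Z}^d)$ with $\|u_n-u\|_2\to 0$. Simultaneously the Gagliardo-type seminorm satisfies $[u_n-u_m]_\alpha\to 0$, so via Plancherel the functions $(\Phi(\theta))^{\alpha/2}\hat u_n(\theta)$ form a Cauchy sequence in $L^2([-\pi,\pi]^d)$, hence converge to some $g\in L^2$.

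The main step is to identify the two limits consistently, i.e.\ to show that the $\ell^2$-limit $u$ in fact lies in $H^\alpha$ and that $[u_n-u]_\alpha\to 0$. The natural way is to note that $\|u_n-u\|_2\to 0$ forces $\hat u_n\to\hat u$ in $L^2([-\pi,\pi]^d)$ (the Fourier transform is an $\ell^2$-to-$L^2$ isometry), hence along a subsequence $\hat u_n\to\hat u$ a.e.\ on $[-\pi,\pi]^d$, so $(\Phi(\theta))^{\alpha/2}\hat u_n\to(\Phi(\theta))^{\alpha/2}\hat u$ a.e.; comparing with the $L^2$-limit $g$ gives $g=(\Phi)^{\alpha/2}\hat u$ a.e. This yields $[u]_\alpha=\|g\|_{L^2}/(2\pi)^{d/2}<+\infty$, so $u\in H^\alpha$, and $[u_n-u]_\alpha\to 0$ follows since $(\Phi)^{\alpha/2}\hat u_n\to g=(\Phi)^{\alpha/2}\hat u$ in $L^2$. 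Combining this with $\sum_x h(x)|u_n(x)-u(x)|^2\to 0$ (which follows from $\|u_n-u\|_2\to 0$ together with $h(x)\leq c_2$, since $\sum_x h(x)|u_n-u|^2\leq c_2\|u_n-u\|_2^2$) gives $\|u_n-u\|_\alpha\to 0$, establishing completeness.

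I expect the main obstacle to be the bookkeeping in the previous paragraph: one must be careful that the a.e.\ subsequential limit of $(\Phi)^{\alpha/2}\hat u_n$ really coincides with the $L^2$-limit $g$, and that passing to a subsequence does not lose the convergence $[u_n-u]_\alpha\to 0$ for the full sequence. This is handled by the standard argument that a Cauchy sequence with a convergent subsequence converges to the same limit, so once the subsequence identifies the limit $u\in H^\alpha$, the Cauchy property upgrades it to full-sequence convergence. The hypothesis $(h_1)$ is doing double duty throughout—the lower bound $c_1$ controls the $\ell^2$ norm (giving positivity and letting us invoke completeness of $\ell^2$), while the upper bound $c_2$ controls the weighted sum $\sum_x h(x)|u_n-u|^2$ by the ordinary $\ell^2$ norm—so I would make sure both bounds are invoked explicitly at the appropriate points.
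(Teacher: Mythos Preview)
Your argument is correct, but it works harder than necessary and misses the simplification the paper exploits. The paper observes, via Lemma~\ref{ok} and $(h_1)$, that
\[
\|u\|_\alpha^2=[u]_\alpha^2+\sum_{x}h(x)|u(x)|^2\leq C(\alpha,d)\|u\|_2^2+c_2\|u\|_2^2=C_1\|u\|_2^2,
\]
which together with your lower bound $c_1\|u\|_2^2\leq\|u\|_\alpha^2$ shows that $\|\cdot\|_\alpha$ and $\|\cdot\|_2$ are \emph{equivalent} norms on $\ell^2(\mathbb{Z}^d)$. Completeness then follows in one line from completeness of $\ell^2(\mathbb{Z}^d)$: if $\{u_n\}$ is Cauchy in $H^\alpha$ it is Cauchy in $\ell^2$, converges to some $u$ in $\ell^2$, and then $\|u_n-u\|_\alpha\leq\sqrt{C_1}\|u_n-u\|_2\to 0$.

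By contrast, you treat the seminorm $[\,\cdot\,]_\alpha$ as genuinely independent of $\|\cdot\|_2$, extracting a separate $L^2$-limit $g$ of $(\Phi)^{\alpha/2}\hat u_n$ and then identifying $g=(\Phi)^{\alpha/2}\hat u$ via an a.e.\ subsequence argument. This is the standard route for continuous fractional Sobolev spaces, where no such norm equivalence holds, and it is perfectly valid here; but in the discrete setting the bound $[u]_\alpha\leq C\|u\|_2$ from Lemma~\ref{ok} (which you invoke only for well-definedness) makes that machinery redundant. What you gain is an argument that transfers verbatim to settings where $\Phi^\alpha$ is unbounded; what the paper gains is a two-line proof tailored to the discrete case.
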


\begin{proof}
Clearly, $H^\alpha$ is a linear space. Note that, for $u,v\in H^\alpha,$ $$\left(\widehat{(-\Delta)^{\alpha}u},\hat{v}\right)_{L^2([-\pi,\pi]^d)}=\overline{\left(\hat{v},\widehat{(-\Delta)^{\alpha}u}\right)}_{L^2([-\pi,\pi]^d)}.$$ 
 Then the bilinear form $(u,v)_\alpha$ is symmetric and $(u,u)_\alpha\geq 0.$ By $(h_1)$, we have
\begin{eqnarray}\label{om}
	c_1\|u\|^2_2\leq \sum\limits_{x\in\mathbb{Z}^d} h(x)|u(x)|^2\leq\|u\|^2_\alpha.
\end{eqnarray}
Hence if $(u,u)_\alpha=0$, then $u=0$.

To show completeness, let $\{u_n\}$ be a Cauchy sequence in $H^\alpha$. Then by (\ref{om}), one gets that $\{u_n\}$ is a Cauchy sequence in $\ell^2(\mathbb{Z}^d)$. Hence, there exists $u\in\ell^2(\mathbb{Z}^d)$ such that
\begin{equation}\label{ob}
	u_n\rightarrow u,\quad \text{in~}\ell^2(\mathbb{Z}^d).
\end{equation}
Moreover, by $(h_1)$ and Lemma \ref{ok}, we obtain that
\begin{eqnarray}\label{on}
	\|u\|^2_\alpha\nonumber&=&[u]^2_\alpha+\sum\limits_{x\in\mathbb{Z}^d} h(x)|u(x )|^2\nonumber\\&\leq& C(\alpha,d)\|u\|^2_2+c_2\|u\|^2_2\nonumber\\&=&C_1\|u\|^2_2.
\end{eqnarray}
Hence it follows from (\ref{ob}) and (\ref{on}) that
$u_n\rightarrow u$ strongly in $H^\alpha.$
	
\end{proof}

\begin{rem}\label{os}
	By (\ref{om}) and (\ref{on}), one sees that the norms $\|\cdot\|_\alpha$ and $\|\cdot\|_2$ are equivalent.
	
\end{rem}

\begin{lm}\label{l01}
Let $(h_1)$ hold. Then for any $2\leq q\leq+\infty$, $H^\alpha\subset \ell^q(\mathbb{Z}^d).$
\end{lm}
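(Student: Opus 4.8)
The plan is to reduce the statement to the elementary nesting of the discrete $\ell^p$ spaces, which is exactly the point at which the discrete setting helps us rather than hinders us. Since by its very definition $H^\alpha\subset\ell^2(\mathbb{Z}^d)$, it suffices to establish the chain of inclusions $\ell^2(\mathbb{Z}^d)\subset\ell^q(\mathbb{Z}^d)$ for every $q\in[2,+\infty]$, together with a norm bound. Everything then follows immediately.

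First I would dispose of the endpoint $q=+\infty$. For any $u\in\ell^2(\mathbb{Z}^d)$ and any fixed $x$, the single term $|u(x)|^2$ is dominated by the whole sum $\sum_{y\in\mathbb{Z}^d}|u(y)|^2=\|u\|_2^2$; taking the supremum over $x$ gives $\|u\|_\infty\leq\|u\|_2$. Next, for $2<q<+\infty$, the key step is the pointwise domination $|u(x)|^q\leq|u(x)|^2$, which is valid precisely when $|u(x)|\leq 1$. By homogeneity I may normalize so that $\|u\|_2=1$; then $|u(x)|\leq 1$ for every $x$, and summing the pointwise inequality yields $\|u\|_q^q=\sum_{x\in\mathbb{Z}^d}|u(x)|^q\leq\sum_{x\in\mathbb{Z}^d}|u(x)|^2=1$, i.e.\ $\|u\|_q\leq\|u\|_2$.

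This establishes $\ell^2(\mathbb{Z}^d)\subset\ell^q(\mathbb{Z}^d)$ with the uniform bound $\|u\|_q\leq\|u\|_2$, and hence $H^\alpha\subset\ell^q(\mathbb{Z}^d)$ for all $q\in[2,+\infty]$. Combining this with the equivalence of $\|\cdot\|_2$ and $\|\cdot\|_\alpha$ recorded in Remark \ref{os} even upgrades the conclusion to a continuous embedding $\|u\|_q\leq C\|u\|_\alpha$, which is the form that will be convenient later when estimating the nonlinear term.

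I do not expect any genuine obstacle here: the only content is the monotonicity of the $\ell^p$ norms for the counting measure, and unlike the continuous Sobolev embedding there is no need for any fractional or dimensional analysis, since summability at $q=2$ automatically forces summability at every larger exponent. The one point worth flagging is that this inclusion runs \emph{opposite} to the continuous case, where $L^2$ does not embed into the higher $L^q$ spaces; it should therefore be attributed to the discreteness of $\mathbb{Z}^d$ rather than to any property of the operator $(-\Delta)^\alpha$.
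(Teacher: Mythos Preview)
Your proof is correct and follows exactly the same route as the paper: both arguments reduce the claim to the chain $H^\alpha\subset\ell^2(\mathbb{Z}^d)\subset\ell^q(\mathbb{Z}^d)$ for $q\ge 2$. The paper simply quotes the nesting $\ell^2\subset\ell^q$ as a known fact, whereas you spell out its one-line proof via normalization; this extra detail is fine but not required.
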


\begin{proof}
For $2\leq q\leq+\infty$, the result follows from $H^\alpha\subset \ell^2(\mathbb{Z}^d)\subset\ell^q(\mathbb{Z}^d)$.
\end{proof}

The energy functional related to the equation (\ref{01}) is given by
\begin{eqnarray*}
I(u)&=&\frac{1}{2(2\pi)^d}\int_{[-\pi,\pi]^d}(\Phi(\theta))^{\alpha}|\hat{u}(\theta)|^2\,d\theta+\frac{1}{2}\sum\limits_{x\in\mathbb{Z}^d}h(x)|u(x)|^2-\sum\limits_{x\in\mathbb{Z}^d}F(x,u)\\&=&\frac{1}{2}\|u\|^2_\alpha-\sum\limits_{x\in\mathbb{Z}^d}F(x,u).	
\end{eqnarray*}

\begin{lm}\label{vn}
Let $(h_1)$ hold. Then
\begin{itemize}
	\item [(i)] $\phi\in C^1(H^\alpha,\mathbb{R})$ and,
	for any $u,v\in H^\alpha,$ $$\langle \phi'(u),v\rangle=\frac{1}{(2\pi)^d}\int_{[-\pi,\pi]^d}(\Phi(\theta))^{\alpha}\hat{u}(\theta)\overline{\hat{v}(\theta)}\,d\theta+\sum\limits_{x\in\mathbb{Z}^d}h(x)u(x)v(x),$$
	where $\phi(u):=\frac{1}{2}\|u\|^2_\alpha;$
	
	\item[(ii)] $\phi'$ is bounded on bounded sets and $\langle\phi'(w),w\rangle=1$ for $w\in S_1,$ where $S_1$ is the unit sphere in $H^\alpha.$
\end{itemize}

\end{lm}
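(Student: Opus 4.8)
The plan is to recognize $\phi(u)=\frac{1}{2}\|u\|_\alpha^2=\frac{1}{2}(u,u)_\alpha$ as the squared norm of the Hilbert space $H^\alpha$ (Lemma \ref{ou}), so that the assertion reduces to the standard fact that this quadratic functional is smooth with derivative given by the inner product. I would establish Fr\'{e}chet differentiability directly from the bilinearity and symmetry of $(\cdot,\cdot)_\alpha$, rather than passing through G\^{a}teaux derivatives and then upgrading.

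First, for fixed $u,v\in H^\alpha$ I would expand
$$\phi(u+v)-\phi(u)=\frac{1}{2}(u+v,u+v)_\alpha-\frac{1}{2}(u,u)_\alpha=(u,v)_\alpha+\frac{1}{2}\|v\|_\alpha^2,$$
using that the bilinear form is symmetric and real on real functions; this last point, that the Fourier integral term $\frac{1}{(2\pi)^d}\int_{[-\pi,\pi]^d}(\Phi(\theta))^{\alpha}\hat{u}(\theta)\overline{\hat{v}(\theta)}\,d\theta$ is real and symmetric in $u,v$, follows from the Hermitian symmetry $\hat{u}(-\theta)=\overline{\hat{u}(\theta)}$ of the transform of a real sequence, equivalently from Plancherel as already used in Lemma \ref{ok}. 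The remainder satisfies $\frac{1}{2}\|v\|_\alpha^2=o(\|v\|_\alpha)$ as $\|v\|_\alpha\to 0$, while $v\mapsto(u,v)_\alpha$ is a bounded linear functional by Cauchy--Schwarz, $|(u,v)_\alpha|\leq\|u\|_\alpha\|v\|_\alpha$. This simultaneously proves Fr\'{e}chet differentiability and identifies $\langle\phi'(u),v\rangle=(u,v)_\alpha$, which is exactly the claimed formula in (i).

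To upgrade to $C^1$, I would show $u\mapsto\phi'(u)$ is continuous from $H^\alpha$ into its dual. Since $\phi'(u)-\phi'(w)=(u-w,\cdot)_\alpha$, the operator norm equals $\sup_{\|v\|_\alpha\leq 1}|(u-w,v)_\alpha|=\|u-w\|_\alpha$, the supremum being attained at $v=(u-w)/\|u-w\|_\alpha$; thus $\phi'$ is in fact an isometry onto its image, in particular Lipschitz continuous, giving $\phi\in C^1(H^\alpha,\R)$. Part (ii) is then immediate from the same identity: $\|\phi'(u)\|_{(H^\alpha)^*}=\|u\|_\alpha$ shows $\phi'$ is bounded on bounded sets, and for $w\in S_1$ we get $\langle\phi'(w),w\rangle=(w,w)_\alpha=\|w\|_\alpha^2=1$. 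I do not expect a genuine obstacle here; the only point requiring care is the well-definedness and realness of the Fourier integral term defining $(\cdot,\cdot)_\alpha$, so that $\phi'(u)$ is a legitimate real-linear functional and the expansion above is valid, which is controlled by Lemma \ref{ok} together with the Hermitian symmetry of $\hat{u}$.
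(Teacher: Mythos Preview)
Your proposal is correct and follows the same underlying idea as the paper --- exploit that $\phi$ is a quadratic form coming from an inner product --- but your execution is more direct. The paper first computes the G\^ateaux derivative by expanding $\phi(u+tv)-\phi(u)$ and letting $t\to 0^+$, and then separately verifies continuity of $\phi'$ (using Lemma~\ref{ok} and Remark~\ref{os} to control the Fourier piece) in order to upgrade to $C^1$; for part (ii) it bounds $\|\phi'(w)\|_{(H^\alpha)^*}$ by a constant $C(\alpha,d)$ via the same auxiliary estimates. By contrast, you go straight to Fr\'echet differentiability from the exact expansion $\phi(u+v)-\phi(u)=(u,v)_\alpha+\tfrac12\|v\|_\alpha^2$ and read off $\|\phi'(u)-\phi'(w)\|_{(H^\alpha)^*}=\|u-w\|_\alpha$ from Cauchy--Schwarz with equality, which gives Lipschitz continuity (indeed an isometry) and the sharp bound $\|\phi'(u)\|_{(H^\alpha)^*}=\|u\|_\alpha$ without invoking Lemma~\ref{ok}. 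Your route is shorter and yields better constants; the paper's route has the minor expository advantage of making the formula for $\langle\phi'(u),v\rangle$ appear as an explicit limit computation.
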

\begin{proof}
(i) By Lemma \ref{ok}, one sees that $\phi$   is well defined on $H^\alpha.$	Let $t>0$ and fix $u,v\in H^\alpha$, 
\begin{eqnarray*}
	\phi(u+tv)-\phi(u)&=&\frac{1}{2}\|u+tv\|^2_\alpha-\frac{1}{2}\|u\|^2_\alpha\\&=&\frac{1}{2(2\pi)^d}\int_{[-\pi,\pi]^d}(\Phi(\theta))^{\alpha}\left (|\hat{u}(\theta)+t\hat{v}(\theta)|^2-|\hat{u}(\theta)|^2\right)\,d\theta\\&&+\frac{1}{2}\sum\limits_{x\in\mathbb{Z}^d}h(x)\left(|u(x)+tv(x)|^2-|u(x)|^2\right)\\&=&\frac{1}{2(2\pi)^d}\int_{[-\pi,\pi]^d}(\Phi(\theta))^{\alpha}\left (2t\hat{u}(\theta)\overline{\hat{v}(\theta)}+t^2|\hat{v}(\theta)|^2\right)\,d\theta\\&&+\frac{1}{2}\sum\limits_{x\in\mathbb{Z}^d}h(x)\left(2tu(x)v(x)+t^2|v(x)|^2\right).
\end{eqnarray*}
Hence 
\begin{eqnarray*}
	\langle \phi'(u),v\rangle&=&\lim\limits_{t\rightarrow0^+}\frac{\phi(u+tv)-\phi(u)}{t}\\&=&\frac{1}{(2\pi)^d}\int_{[-\pi,\pi]^d}(\Phi(\theta))^{\alpha}\hat{u}(\theta)\overline{\hat{v}(\theta)}\,d\theta+\sum\limits_{x\in\mathbb{Z}^d}h(x)u(x)v(x).
\end{eqnarray*}
Thus $\phi$ is Gateaux differential on $H^\alpha.$ Next we claim that $\phi': H^\alpha\rightarrow(H^\alpha)^*$ is continuous.

In fact, we assume that $\{u_n\}\subset H^\alpha$ is a sequence satisfying $u_n\rightarrow u$ in $H^\alpha$ as $n\rightarrow+\infty.$ By the H\"{o}der inequality, Lemma \ref{ok} and Remark \ref{os}, we have
\begin{eqnarray*}
\|\phi'(u_n)-\phi'(u)\|_{(H^\alpha)^*} &=&\sup\limits_{\|v\|_\alpha\leq 1}|\langle\phi'(u_n)-\phi'(u),v\rangle|\\&\leq&	\sup\limits_{\|v\|_\alpha\leq 1}\left([u_n-u]_\alpha[v]_\alpha+\|u_n-u\|_\alpha\|v\|_\alpha\right)\\&\leq& C(\alpha,d)\|u_n-u\|_\alpha\\&\rightarrow& 0.
\end{eqnarray*}
This implies that $\phi'$ is continuous. Hence $\phi\in C^1(H^\alpha,\mathbb{R})$ .

(ii) Let $w\in S_1$, we have
\begin{eqnarray*}
	\|\phi'(w)\|_{(H^\alpha)^*} &=&\sup\limits_{\|v\|_\alpha\leq 1}|\langle\phi'(w),v\rangle|\\&\leq&	\sup\limits_{\|v\|_\alpha\leq 1}\left([w]_\alpha[v]_\alpha+\|w\|_\alpha\|v\|_\alpha\right)\\&\leq& C(\alpha,d).\end{eqnarray*}
By (i), one gets that
$$\langle \phi'(w),w\rangle=1.$$

\end{proof}
\begin{rem}\label{vb}
It follows from Lemma \ref{vn} that $S_1$ is a $C^1$-submanifold of $H^\alpha$ and the tangent space of $S_1$ at $w$ is
$$T_w(S_1)=\{z\in H^\alpha: \langle\phi'(w),z\rangle=0\}.$$
\end{rem}

By Lemma \ref{vn} and assumptions on $f$, one gets easily that $I\in C^1(H^\alpha,\R)$ and, for $u, v\in H^\alpha$, 
\begin{eqnarray*}
	\langle I'(u),v\rangle&=&\langle \phi'(u),v\rangle-\sum\limits_{x\in\mathbb{Z}^d}f(x,u)v(x)\\&=&(u,v)_\alpha-\sum\limits_{x\in\mathbb{Z}^d}f(x,u)v(x).
\end{eqnarray*}

\begin{lm}\label{oa}
Let $(h_1)$ and $(f_1)$-$(f_5)$ hold. Then every critical point $u\in H^\alpha$	 of the energy functional $I$ is a solution of the equation (\ref{01}).
\end{lm}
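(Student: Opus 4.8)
The plan is to unwind the critical point equation $I'(u)=0$ into its pointwise form on the lattice by feeding it the right test functions. Since $u$ is a critical point of $I$, the identity
$$\langle I'(u),v\rangle=(u,v)_\alpha-\sum_{x\in\mathbb{Z}^d}f(x,u(x))v(x)=0$$
holds for every $v\in H^\alpha$. The first task is to recognize the bilinear part $(u,v)_\alpha$ as a genuine $\ell^2(\mathbb{Z}^d)$ pairing against $(-\Delta)^\alpha u+h u$, after which a localization argument finishes the proof.

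To carry this out I would rewrite the integral term using the Fourier characterization (\ref{b1}): since $\widehat{(-\Delta)^\alpha u}(\theta)=(\Phi(\theta))^\alpha\hat u(\theta)$, we have
$$\frac{1}{(2\pi)^d}\int_{[-\pi,\pi]^d}(\Phi(\theta))^\alpha\hat u(\theta)\overline{\hat v(\theta)}\,d\theta=\frac{1}{(2\pi)^d}\int_{[-\pi,\pi]^d}\widehat{(-\Delta)^\alpha u}(\theta)\overline{\hat v(\theta)}\,d\theta.$$
By Corollary \ref{oo}, $(-\Delta)^\alpha u\in\ell^2(\mathbb{Z}^d)$, so Plancherel's identity (the isometry $\ell^2\to L^2$) applies and converts the right-hand integral into the physical-space sum $\sum_{x\in\mathbb{Z}^d}(-\Delta)^\alpha u(x)\,v(x)$. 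Adding the $h$-term then gives
$$(u,v)_\alpha=\sum_{x\in\mathbb{Z}^d}\big[(-\Delta)^\alpha u(x)+h(x)u(x)\big]v(x),$$
so that the critical point condition becomes $\sum_{x\in\mathbb{Z}^d}\big[(-\Delta)^\alpha u(x)+h(x)u(x)-f(x,u(x))\big]v(x)=0$ for all $v\in H^\alpha$.

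Finally, I would localize by testing against the indicator functions $\delta_y$, $y\in\mathbb{Z}^d$, where $\delta_y(x)=1$ if $x=y$ and $0$ otherwise. Each $\delta_y$ lies in $H^\alpha$: it is finitely supported, hence in $\ell^2(\mathbb{Z}^d)$; its seminorm $[\delta_y]_\alpha^2=(2\pi)^{-d}\int_{[-\pi,\pi]^d}(\Phi(\theta))^\alpha\,d\theta$ is finite because $0\le\Phi\le 4d$; and $\sum_{x\in\mathbb{Z}^d}h(x)|\delta_y(x)|^2=h(y)<\infty$ by $(h_1)$. Choosing $v=\delta_y$ collapses the sum to a single term and yields $(-\Delta)^\alpha u(y)+h(y)u(y)-f(y,u(y))=0$ for every $y\in\mathbb{Z}^d$, which is exactly the equation (\ref{01}).

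The only delicate point is the Fourier-to-physical-space step: one must know that $(-\Delta)^\alpha u$ is honestly an $\ell^2$ sequence before invoking Plancherel to identify the integral pairing with a summation, and this is precisely what Corollary \ref{oo} supplies; the localization via $\delta_y$ is then routine once admissibility of these test functions is verified. The assumptions $(f_1)$--$(f_5)$ enter only to guarantee that $\sum_{x\in\mathbb{Z}^d}f(x,u(x))v(x)$ is a well-defined continuous linear functional on $H^\alpha$ (through the growth estimate (\ref{nn}) together with Lemma \ref{l01}), so that $\langle I'(u),v\rangle$ may be split term by term as above.
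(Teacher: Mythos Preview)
Your proof is correct and follows essentially the same approach as the paper: both test the critical point identity against the indicator functions $\delta_y\in H^\alpha$ to extract the pointwise equation. The only cosmetic difference is that the paper computes the Fourier integral against $\delta_y$ directly via the kernel representation~(\ref{ve}) (using $\widehat{\delta_y}(\theta)=e^{iy\cdot\theta}$), whereas you first invoke Corollary~\ref{oo} and Plancherel to rewrite $(u,v)_\alpha=\sum_{x}\big[(-\Delta)^\alpha u(x)+h(x)u(x)\big]v(x)$ for all $v\in H^\alpha$ and then specialize; the content is the same.
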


\begin{proof}
Let $u\in H^\alpha$ be a critical point of $I$, i.e. $I'(u)=0.$	 Then for any $v\in H^\alpha$,
\begin{equation}\label{oe}
\frac{1}{(2\pi)^d}\int_{[-\pi,\pi]^d}(\Phi(\theta))^{\alpha}\hat{u}(\theta)\overline{\hat{v}(\theta)}\,d\theta+\sum\limits_{x\in\mathbb{Z}^d}h(x)u(x)v(x)\,=\sum\limits_{x\in\mathbb{Z}^d}f(x,u)v(x).
\end{equation}
For any $y\in \mathbb{Z}^d$, let
\begin{eqnarray*}
	v_0(y)=
	\left\{
	\begin{array}{ll}
		1, &y=x,\\
		0, & y\neq x.
	\end{array}
	\right.
\end{eqnarray*}
By Remark \ref{os}, one sees that $v_0\in H^\alpha.$ Moreover, $\widehat{v_0}(\theta)=\sum\limits_{y\in\mathbb{Z}^d}v_0(y)e^{iy\cdot\theta}\,=e^{ix\cdot\theta}$. Then by (\ref{ve}), we have
\begin{eqnarray*}
\frac{1}{(2\pi)^d}\int_{[-\pi,\pi]^d}(\Phi(\theta))^{\alpha}\hat{u}(\theta)\overline{\widehat{v_0}(\theta)}\,d\theta&=&\frac{1}{(2\pi)^d}\int_{[-\pi,\pi]^d}(\Phi(\theta))^{\alpha}\hat{u}(\theta)e^{-ix\cdot\theta}\,d\theta\\&=&	\frac{1}{(2\pi)^d}\int_{[-\pi,\pi]^d}(\Phi(\theta))^{\alpha}\left(\sum\limits_{y\in\mathbb{Z}^d}u(y)e^{iy\cdot \theta}\right)e^{-ix\cdot\theta}\,d\theta\\&=&\sum\limits_{y\in\mathbb{Z}^d}K^\alpha(x-y)u(y)\,\\&=&(-\Delta)^\alpha u(x).
\end{eqnarray*}
By taking $v=v_0$ in (\ref{oe}) and the previous inequality, we get that
$$(-\Delta)^\alpha u(x)+h(x)u(x)=f(x,u(x)).$$
Hence, $u$ is a solution of the equation (\ref{01}).
\end{proof}

At the end of this section, we give a discrete Lions lemma corresponding to Lions \cite{L1} on $\mathbb{R}^{d}$, which denies a sequence $\{u_n\}$ to distribute itself over on $\mathbb{Z}^d$.

\begin{lm}\label{l-0}
(Lions lemma) Assume that $\{u_n\}$ is bounded in $H^\alpha$ and $\|u_{n}\|_{\infty}\rightarrow0$ as $n\rightarrow+\infty.$
Then for any $2<q<+\infty$, as $n\rightarrow+\infty,$ $$u_n\rightarrow0,\qquad \text{in~} \ell^q(\mathbb{Z}^d).$$

\end{lm}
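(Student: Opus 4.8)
The plan is to prove the discrete Lions lemma by interpolating between $\ell^2$ and $\ell^\infty$ control. The key observation is that for any $q$ with $2 < q < +\infty$ we can write, for each fixed $x \in \mathbb{Z}^d$,
$$|u_n(x)|^q = |u_n(x)|^{q-2}|u_n(x)|^2 \leq \|u_n\|_\infty^{q-2}|u_n(x)|^2.$$
Summing over $x \in \mathbb{Z}^d$ then yields
$$\|u_n\|_q^q = \sum_{x \in \mathbb{Z}^d}|u_n(x)|^q \leq \|u_n\|_\infty^{q-2}\sum_{x \in \mathbb{Z}^d}|u_n(x)|^2 = \|u_n\|_\infty^{q-2}\,\|u_n\|_2^2.$$
This is the heart of the argument: the hypothesis $\|u_n\|_\infty \to 0$ kills the first factor (since $q - 2 > 0$), while the $\ell^2$ factor must be kept under control.

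The control on $\|u_n\|_2$ is exactly what the boundedness in $H^\alpha$ provides. By Remark~\ref{os}, the norms $\|\cdot\|_\alpha$ and $\|\cdot\|_2$ are equivalent, so a uniform bound $\|u_n\|_\alpha \leq M$ translates into a uniform bound $\|u_n\|_2 \leq C M$ for some constant $C$ independent of $n$. First I would fix such a bound, then combine it with the pointwise interpolation estimate above to conclude
$$\|u_n\|_q^q \leq \|u_n\|_\infty^{q-2}\,(CM)^2 \longrightarrow 0 \quad \text{as } n \to +\infty,$$
which gives $u_n \to 0$ in $\ell^q(\mathbb{Z}^d)$.

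I do not expect any genuine obstacle here: unlike the continuous Lions lemma on $\mathbb{R}^d$, where one must rule out mass escaping to infinity via a covering argument and Sobolev embeddings on balls, the discrete setting makes the statement almost elementary. The discreteness means $\ell^2 \subset \ell^q$ continuously for $q \geq 2$ (a fact already recorded in Lemma~\ref{l01}), so there is no loss of compactness to fight and no need for concentration-compactness machinery. The only point requiring a word of care is ensuring the constant from the norm equivalence does not depend on $n$, which is immediate since it is the structural constant $C(\alpha, d)$ from $(h_1)$ and Lemma~\ref{ok}. In short, the entire proof reduces to the one-line interpolation inequality together with the uniform $\ell^2$ bound inherited from $H^\alpha$-boundedness.
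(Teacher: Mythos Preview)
Your proof is correct and follows essentially the same approach as the paper: both use the interpolation inequality $\|u_n\|_q^q \leq \|u_n\|_2^2\,\|u_n\|_\infty^{q-2}$ together with the $\ell^2$-boundedness of $\{u_n\}$ inherited from its $H^\alpha$-boundedness. The paper simply states this in two lines, while you spell out the pointwise estimate and the norm equivalence, but there is no substantive difference.
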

\begin{proof}
Since $\{u_n\}$ is bounded in $H^\alpha$, $\{u_n\}$ is bounded in $\ell^2(\mathbb{Z}^d)$. For $2<q<+\infty$, this result follows from the interpolation inequality
\begin{eqnarray*}
\|u_n\|^{q}_{q}\leq\|u_n\|_{2}^{2}\|u_n\|_{\infty}^{q-2}.
\end{eqnarray*}

\end{proof}

\section{Nehari manifolds}

In this section, we apply the method of Nehari manifold developed by Szulkin and Weth \cite{SW} to the discrete fractional Schr\"{o}dinger equation (\ref{01}). Since we have established a discrete (Lions) Lemma \ref{l-0},  several modifications would be necessary to deal with our problem.

The Nehari manifold of the functional $I$ corresponding to the equation (\ref{01}) is 
$$\mathcal{M}=\{u\in H^\alpha\backslash\{0\}: \langle I'(u),u\rangle=0\},$$
 where $$\langle I'(u),u \rangle=\|u\|^2_\alpha-\sum\limits_{x\in\mathbb{Z}^d}f(x,u)u(x).$$	
 
 \begin{df}
We say $u\in H^\alpha$ is a ground state solution to the equation (\ref{01}), if 
$u$ is a nontrivial critical point of the functional $I$ satisfying $$
I(u)=\inf\limits_{\mathcal{M}} I>0.$$
\end{df}

Let $S_\rho=\{u\in H^\alpha: \|u\|_\alpha=\rho\},\, \rho>0$. Now we introduce the Nehari's method, which can be seen in \cite{HX,SW}.

\begin{lm}\label{l0} 
Let $(h_1)$ and $(f_1)$-$(f_5)$ hold. Then
\begin{itemize}
\item[(i)] for any $w\in H^\alpha\backslash\{0\},$ there exists a unique $s_w>0$ such that $s_w w\in\mathcal{M}$ and $I(s_w w)=\max\limits_{s>0}I(sw)$;
\item[(ii)] there exists $\delta>0$ such that $s_w\geq\delta$ for each $w\in S_1$; and for each compact subset $\mathcal{K}\subset S_1$, there exists $C_{\mathcal{K}}$ such that $s_w\leq C_{\mathcal{K}}$ for each $w\in\mathcal{K}$.

\end{itemize}
\end{lm}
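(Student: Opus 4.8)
The plan is to analyse, for a fixed $w\in H^\alpha\setminus\{0\}$, the one–dimensional \emph{fibering map}
$$\psi(s):=I(sw)=\frac{s^2}{2}\|w\|_\alpha^2-\sum_{x\in\mathbb{Z}^d}F(x,sw(x)),\qquad s>0,$$
which is $C^1$ since $I\in C^1(H^\alpha,\mathbb{R})$, with $\psi'(s)=\langle I'(sw),w\rangle$. First I would record the two ends of its behaviour. Near $s=0$, integrating (\ref{nn}) gives $F(x,u)\le\frac{\varepsilon}{2}|u|^2+\frac{c_\varepsilon}{p}|u|^p$, so with the embedding $H^\alpha\subset\ell^2\cap\ell^p$ (Lemma \ref{l01}) and Remark \ref{os} the quadratic term dominates and $\psi(s)>0$ for small $s>0$, while $\psi(0)=0$. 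For $s\to\infty$, since $F\ge0$ by (\ref{30}) I keep a single coordinate $x_0$ with $w(x_0)\ne0$ and use $(f_5)$ in the form $\psi(s)/s^2\le\frac{1}{2}\|w\|_\alpha^2-\frac{F(x_0,sw(x_0))}{(sw(x_0))^2}w(x_0)^2\to-\infty$, whence $\psi(s)\to-\infty$. Consequently $\psi$ attains a positive maximum at some $s_w>0$, which is a critical point.

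The heart of (i) is uniqueness, and this is where $(f_4)$ enters. Writing $\psi'(s)=s\big(\|w\|_\alpha^2-g(s)\big)$ with
$$g(s):=\frac{1}{s}\sum_{x\in\mathbb{Z}^d}f(x,sw(x))w(x)=\sum_{x\in\mathbb{Z}^d}\frac{f(x,sw(x))}{sw(x)}\,w(x)^2,$$
a zero of $\psi'$ on $(0,\infty)$ is exactly a solution of $g(s)=\|w\|_\alpha^2$. The point is that $g$ is \emph{strictly increasing} on $(0,\infty)$: for each $x$ with $w(x)\ne0$ the factor $f(x,sw(x))/(sw(x))$ equals $f(x,v)/|v|$ up to the sign dictated by $\operatorname{sign}w(x)$, and $(f_4)$ (the monotonicity of $u\mapsto f(x,u)/|u|$ on each half-line) makes each summand strictly increasing in $s$; checking both cases $w(x)>0$ and $w(x)<0$ is the only genuinely delicate computation. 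Moreover (\ref{nn}) gives $g(0^+)=0$, while (\ref{30}) together with $(f_5)$ (which yield $f(x,u)/u\to+\infty$ as $|u|\to\infty$) give $g(s)\to+\infty$. Hence $g(s)=\|w\|_\alpha^2$ has a unique root $s_w$, with $\psi'>0$ before it and $\psi'<0$ after it, so $s_w$ is the unique critical point and the global maximiser, and $s_ww\in\mathcal{M}$.

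For (ii), the lower bound is soft: if $\|w\|_\alpha=1$ then $s_ww\in\mathcal{M}$ means $s_w^2=\sum_x f(x,s_ww)(s_ww)$, and bounding the right side by (\ref{nn}) and the embeddings gives $s_w^2\le\frac{1}{2}s_w^2+Cs_w^p$, i.e. $s_w^{p-2}\ge\delta'$ for a uniform $\delta'>0$; since $p>2$ this yields $s_w\ge\delta$. For the upper bound I would argue by contradiction: if $w_n\in\mathcal{K}$ with $s_{w_n}\to\infty$, then by compactness $w_n\to w\in\mathcal{K}$, so $\|w\|_\alpha=1$ and in particular $w_n(x_0)\to w(x_0)\ne0$ for some $x_0$ (pointwise convergence following from convergence in $\ell^2$). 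Using $I(s_{w_n}w_n)>0$ on $\mathcal{M}$ (a consequence of (\ref{30})) one gets $\frac{1}{2}>\sum_x F(x,s_{w_n}w_n)/s_{w_n}^2$, but the single $x_0$-term tends to $+\infty$ by $(f_5)$ because $s_{w_n}w_n(x_0)\to\infty$ and $w_n(x_0)^2\to w(x_0)^2>0$ — a contradiction. Hence $s_w\le C_{\mathcal{K}}$ on $\mathcal{K}$.

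The main obstacle is the uniqueness argument in (i): everything hinges on turning the pointwise hypothesis $(f_4)$ into strict monotonicity of the scalar function $g$ on all of $(0,\infty)$, handling both signs of $w(x)$ and justifying that the discrete sum inherits the strict monotonicity. The discreteness itself is harmless — the sums over $\mathbb{Z}^d$ play the role of the integrals in the continuous Szulkin–Weth setting and the dominated–convergence and embedding estimates transfer essentially verbatim — so the compactness step in (ii) and the sign bookkeeping in the monotonicity of $g$ are the only places requiring real care.
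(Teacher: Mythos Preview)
Your proposal is correct and is precisely the standard Szulkin--Weth fibering argument that the paper has in mind: the paper itself gives no proof of Lemma~\ref{l0}, merely stating that it is similar to Proposition~3.3 in \cite{HX}, and your analysis of the fibering map $\psi(s)=I(sw)$, the monotonicity of $g(s)=\sum_x\frac{f(x,sw(x))}{sw(x)}w(x)^2$ via $(f_4)$, and the compactness contradiction for (ii) are exactly what that reference contains. The sign bookkeeping for $w(x)<0$ and the strictness of the monotonicity (needed for uniqueness, and implicitly assumed by the paper since it derives the strict inequality~(\ref{30}) from $(f_4)$) are handled correctly.
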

\begin{proof}
The proof is similar to that of Proposition 3.3 in \cite{HX}, we omit here.	
\end{proof}
The Lemma \ref{l0} (i) states that $s_ww$ is the unique point on the ray $s\mapsto sw, s>0,$ which intersects $\mathcal{M}$;
the first part of (ii) implies that $\mathcal{M}$ is closed in $H^\alpha$ and bounded away from $0$.

\begin{lm}\label{l91}
Let $(h_1)$ and $(f_1)$-$(f_5)$ hold. Then
\begin{itemize}
\item[(i)] there exists $\rho>0$	such that $\inf\limits_{S_\rho}I>0$, and then $c:=\inf\limits_{\mathcal{M}}I\geq\inf\limits_{S_\rho}I>0;$
\item[(ii)] for any $u\in\mathcal{M}$, $\|u\|_\alpha\geq\sqrt{2c}$.
\end{itemize}
\end{lm}
\begin{proof}
(i) Since $p>2$, by Lemma \ref{l01}, we have 
$$\|u\|_p\leq\|u\|_2\leq c_1\|u\|_\alpha.$$
Then, for $u\in H^\alpha$, it follows from (\ref{nn}) that
\begin{eqnarray*}
I(u)&=&\frac{1}{2}\|u\|^2_\alpha-\sum\limits_{x\in\mathbb{Z}^d}F(x,u)\,\\&\geq&\frac{1}{2}\|u\|^2_\alpha-\varepsilon\|u\|^2_2-C_\varepsilon\|u\|^p_p\\&\geq&\frac{1}{2}\|u\|^2_\alpha-\varepsilon\|u\|^2_\alpha-C_\varepsilon\|u\|^p_\alpha.
\end{eqnarray*}
Hence the previous inequality implies that $\inf\limits_{S_\rho}I>0$ holds for $\rho>0$ small enough.

By Lemma \ref{l0} (i), for any $u=s_w w\in \mathcal{M}$, there is $s>0$ such that $sw\in S_\rho$ and $$I(u)\geq I(sw)\geq\inf\limits_{S_\rho}I>0.$$
Hence $c:=\inf\limits_{\mathcal{M}}I\geq\inf\limits_{S_\rho}I>0.$

(ii) By (\ref{30}) and $c=\inf\limits_{\mathcal{M}}I$, for $u\in\mathcal{M}$, we have

$$c\leq I(u)=\frac{1}{2}\|u\|^2_\alpha-\sum\limits_{x\in\mathbb{Z}^d}F(x,u)\,\leq\frac{1}{2}\|u\|^2_\alpha.$$
Hence $\|u\|_\alpha\geq\sqrt{2c}.$ 

\end{proof}

Define the mapping $m:S_1\rightarrow\mathcal{M}$ by setting $m(w):=s_w w$, where $s_w$ is as in Lemma \ref{l0}.
\begin{lm}\label{i}
Let $(h_1)$ and $(f_1)$-$(f_5)$ hold. Then the mapping $m$ is a homeomorphism between $S_1$ and $\mathcal{M}$, and the inverse of $m$ is given by $m^{-1}(u)=\frac{u}{\|u\|_\alpha}$.
\end{lm}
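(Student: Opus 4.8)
The plan is to prove that $m$ is a bijection with the stated inverse first, and then treat the continuity of $m^{-1}$ and of $m$ separately, the continuity of $m$ being the only substantial point.

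First I would establish bijectivity directly from the uniqueness in Lemma \ref{l0}(i). For $w\in S_1$ one has $m^{-1}(m(w))=s_w w/\|s_w w\|_\alpha=w$, since $s_w>0$ and $\|w\|_\alpha=1$. Conversely, for $u\in\mathcal{M}$ set $w:=u/\|u\|_\alpha\in S_1$, so that $u=\|u\|_\alpha w$ lies on the ray $s\mapsto sw$; as $u\in\mathcal{M}$ and $s_w$ is the \emph{unique} positive number with $s_w w\in\mathcal{M}$, I get $s_w=\|u\|_\alpha$, whence $m(w)=s_w w=u$. This shows $m$ is a bijection and that its inverse is $m^{-1}(u)=u/\|u\|_\alpha$.

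The continuity of $m^{-1}$ is immediate: the map $u\mapsto u/\|u\|_\alpha$ is continuous on $H^\alpha\backslash\{0\}$, and $\mathcal{M}\subset H^\alpha\backslash\{0\}$ is bounded away from $0$ by Lemma \ref{l91}(ii), so no difficulty arises. The heart of the matter is the continuity of $m$, which reduces to the continuity of the map $w\mapsto s_w$ on $S_1$, since $m(w)=s_w w$. I would argue by sequences: let $w_n\to w$ in $S_1$. The set $\mathcal{K}:=\{w_n:n\in\mathbb{N}\}\cup\{w\}$ is compact, so by Lemma \ref{l0}(ii) the numbers $s_{w_n}$ satisfy $\delta\le s_{w_n}\le C_{\mathcal{K}}$; in particular $\{s_{w_n}\}$ is bounded and bounded away from $0$. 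To conclude $s_{w_n}\to s_w$, it suffices to show every convergent subsequence of $\{s_{w_n}\}$ has limit $s_w$. If $s_{w_{n_k}}\to s_0\in[\delta,C_{\mathcal{K}}]$, then $s_{w_{n_k}}w_{n_k}\to s_0 w$ in $H^\alpha$; passing to the limit in the Nehari constraint $\langle I'(s_{w_{n_k}}w_{n_k}),s_{w_{n_k}}w_{n_k}\rangle=0$, and using that $u\mapsto\langle I'(u),u\rangle=\|u\|^2_\alpha-\sum_{x\in\mathbb{Z}^d}f(x,u)u(x)$ is continuous on $H^\alpha$ (as $I\in C^1(H^\alpha,\mathbb{R})$), I obtain $\langle I'(s_0 w),s_0 w\rangle=0$. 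Since $s_0\ge\delta>0$ and $\|w\|_\alpha=1$, we have $s_0 w\neq 0$, hence $s_0 w\in\mathcal{M}$; by the uniqueness in Lemma \ref{l0}(i) applied to the ray through $w$, necessarily $s_0=s_w$. Therefore $s_{w_n}\to s_w$ and $m(w_n)=s_{w_n}w_n\to s_w w=m(w)$.

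The main obstacle is precisely this last step, namely showing $w\mapsto s_w$ is continuous, where the delicate ingredient is passing to the limit in the Nehari constraint. This is clean here because the embedding $H^\alpha\subset\ell^q(\mathbb{Z}^d)$ from Lemma \ref{l01} together with the growth bound (\ref{nn}) makes $u\mapsto\sum_{x\in\mathbb{Z}^d}f(x,u)u(x)$ continuous on $H^\alpha$, so no compactness beyond the a priori bounds of Lemma \ref{l0}(ii) is needed; the subsequence-plus-uniqueness device then upgrades mere boundedness of $\{s_{w_n}\}$ to convergence.
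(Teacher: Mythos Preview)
Your proof is correct and is precisely the standard Szulkin--Weth argument the paper defers to by citing \cite{HX}: bijectivity from the uniqueness in Lemma~\ref{l0}(i), continuity of $m^{-1}$ from the continuity of $u\mapsto u/\|u\|_\alpha$ on $H^\alpha\setminus\{0\}$, and continuity of $m$ via the a~priori bounds of Lemma~\ref{l0}(ii) plus a subsequence-and-uniqueness argument. One minor remark: invoking Lemma~\ref{l91}(ii) for the continuity of $m^{-1}$ is unnecessary (it is needed only for the Lipschitz estimate in Lemma~\ref{l99}); continuity of $u\mapsto u/\|u\|_\alpha$ on $H^\alpha\setminus\{0\}$ already suffices since $\mathcal{M}\subset H^\alpha\setminus\{0\}$ by definition.
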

\begin{proof}
The proof of this lemma is essentially the same as for Proposition 3.4 in \cite{HX}, we omit here.	
\end{proof}
Define the functional $\Psi:S_1\rightarrow \R$ by setting $\Psi(w):=I(m(w))$. Since $I\in C^1(H^\alpha,\mathbb{R})$ and $m$ is a continuous mapping, one gets that $\Psi\in C^1(S_1,\mathbb{R}).$ 

Moreover, recall that a sequence $\{u_n\}\subset H^\alpha$ is called
a $(PS)$ sequence of $I$, if $\{I(u_n)\}$ is bounded and $I'(u_n)\rightarrow0$ in $(H^\alpha)^*$, where $(H^\alpha)^*$ is the dual space of $H^\alpha$. If $I(u_n)\rightarrow r$ and $
I'(u_n)\rightarrow 0$ in $(H^\alpha)^*$, we call $\{u_n\}$ a $(PS)_r$ sequence of $I$.

\begin{lm}\label{lm1}
Let $(h_1)$ and $(f_1)$-$(f_5)$ hold. Then
\begin{itemize}
\item[(i)] if $\{w_n\}$ is a $(PS)$ sequence of $\Psi$, then $\{m(w_n)\}$ is a $(PS)$ sequence of $I$; if $\{u_n\}$ is a $(PS)$ sequence of $I$, then $\{m^{-1}(u_n)\}$ is a $(PS)$ sequence of $\Psi$;

\item[(ii)] $w$ is a critical point of $\Psi$ if and only if $m(w)$ is a nontrivial critical point of $I$.

\end{itemize}
\end{lm}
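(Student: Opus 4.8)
The plan is to deduce both parts from a single differential identity relating $\Psi'$ on $S_1$ to $I'$ on $H^\alpha$, in the spirit of Szulkin--Weth. I take as given (as recorded just before the statement) that $\Psi\in C^1(S_1,\R)$, together with the facts already proved: $m$ is a homeomorphism with $m^{-1}(u)=u/\|u\|_\alpha$ (Lemma \ref{i}), $\langle\phi'(w),z\rangle=(w,z)_\alpha$ (Lemma \ref{vn}), and the lower bound $\|u\|_\alpha\ge\sqrt{2c}$ on $\M$ (Lemma \ref{l91}).

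First I would establish the key identity: for $w\in S_1$ and $z\in T_w(S_1)=\{z:(w,z)_\alpha=0\}$,
\[
\langle\Psi'(w),z\rangle=\|m(w)\|_\alpha\,\langle I'(m(w)),z\rangle .
\]
To get it, pick a $C^1$ curve $\gamma$ in $S_1$ with $\gamma(0)=w$ and $\dot\gamma(0)=z$, and differentiate $\Psi(\gamma(t))=I\big(s_{\gamma(t)}\gamma(t)\big)$ at $t=0$. The inner derivative $\frac{d}{dt}\big|_{t=0}\big(s_{\gamma(t)}\gamma(t)\big)$ is a multiple of $w$ (from the variation of $s_{\gamma(t)}$, which is $C^1$ — exactly what makes $\Psi$ a $C^1$ functional) plus $s_w z$; since $m(w)=s_w w\in\M$ we have $\langle I'(m(w)),w\rangle=s_w^{-1}\langle I'(m(w)),m(w)\rangle=0$, so the $w$-part drops out and only $s_w z=\|m(w)\|_\alpha z$ survives.

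Next, because $H^\alpha=\R w\oplus T_w(S_1)$ is an orthogonal splitting and $I'(m(w))$ annihilates $w$, the dual norm of $I'(m(w))$ already equals its supremum over the unit ball of $T_w(S_1)$; taking suprema in the identity gives $\|\Psi'(w)\|_{T_w(S_1)^*}=\|m(w)\|_\alpha\,\|I'(m(w))\|_{(H^\alpha)^*}$. Part (ii) is immediate: since $\|m(w)\|_\alpha>0$, we have $\Psi'(w)=0$ iff $I'(m(w))=0$, and $m(w)=s_w w\ne0$ makes such a critical point nontrivial; the converse is the same equivalence read backwards. For part (i), if $\{w_n\}$ is a $(PS)$ sequence of $\Psi$ then $I(m(w_n))=\Psi(w_n)$ is bounded and, using $\|m(w_n)\|_\alpha\ge\sqrt{2c}$, the identity gives $\|I'(m(w_n))\|_{(H^\alpha)^*}\le\|\Psi'(w_n)\|/\sqrt{2c}\to0$, so $\{m(w_n)\}$ is a $(PS)$ sequence of $I$. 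Conversely, for $\{u_n\}\subset\M$ a $(PS)$ sequence of $I$ and $w_n=m^{-1}(u_n)$, we get $\Psi(w_n)=I(u_n)$ bounded and $\|\Psi'(w_n)\|=\|u_n\|_\alpha\,\|I'(u_n)\|$, so it remains only to bound $\{\|u_n\|_\alpha\}$ from above.

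That upper bound is the one nonformal point, and it reduces to the coercivity of $I$ on $\M$, which I expect to be the main obstacle. I would argue by contradiction: suppose $u_n\in\M$, $\|u_n\|_\alpha\to\infty$, yet $I(u_n)$ stays bounded, and set $v_n=u_n/\|u_n\|_\alpha$. If $\|v_n\|_\infty\to0$, the discrete Lions Lemma \ref{l-0} yields $\|v_n\|_p\to0$, so by (\ref{nn}) (with $\varepsilon$ small) and Lemma \ref{l0}(i), $I(u_n)\ge I(sv_n)$ with $\liminf_n I(sv_n)\ge s^2/4$ for every fixed $s>0$, forcing $I(u_n)$ to be unbounded — absurd. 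If instead $\|v_n\|_\infty\not\to0$, I translate by the integer sites where $|v_n|$ is nearly maximal; this is legitimate because $h$ and $F$ are $1$-periodic by $(f_6)$, so both $I$ and $\|\cdot\|_\alpha$ are invariant under $\mathbb{Z}^d$-translations. Along a subsequence the translates converge pointwise to some $v\ne0$, and at a site where $v\ne0$ the superquadratic growth $(f_5)$ gives $\sum_x F(x,u_n)/\|u_n\|_\alpha^2\to+\infty$, whence $I(u_n)/\|u_n\|_\alpha^2\to-\infty$, contradicting $I(u_n)\ge c>0$ from Lemma \ref{l91}(i). Once coercivity is secured, $\{\|u_n\|_\alpha\}$ is bounded and the converse implication closes.
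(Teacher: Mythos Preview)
Your overall strategy---derive the identity $\langle\Psi'(w),z\rangle=\|m(w)\|_\alpha\,\langle I'(m(w)),z\rangle$ on $T_w(S_1)$, pass to the norm equality via the orthogonal splitting $H^\alpha=T_w(S_1)\oplus\R w$, and read off (ii) and both halves of (i)---is exactly the Szulkin--Weth route the paper invokes (it just records the splitting from Lemma~\ref{vn} and cites Proposition~3.7 of \cite{HX}). Your identification of coercivity of $I$ on $\M$ as the only nontrivial input for the converse half of (i) is also correct; the paper isolates precisely this as the next lemma, Lemma~\ref{lm3}.

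There is one genuine slip in your inline coercivity argument: you appeal to $(f_6)$ to make $I$ and $\|\cdot\|_\alpha$ translation-invariant, but the lemma is stated only under $(h_1)$ and $(f_1)$--$(f_5)$. The paper's proof of Lemma~\ref{lm3} avoids $(f_6)$ as follows. Under $(h_1)$ the norm $\|\cdot\|_\alpha$ is equivalent to $\|\cdot\|_2$ (Remark~\ref{os}), which \emph{is} translation-invariant, so the shifted sequence $\tilde v_n(\cdot)=v_n(\cdot+y_n)$ is still bounded in $H^\alpha$ and has a pointwise limit $\tilde v$ with $\tilde v(0)\neq0$. Then, rather than asking $I$ to be invariant, one uses $F\ge0$ from (\ref{30}) to bound $\sum_x F(x,u_n)$ below by the single term at $x=y_n$, and applies the \emph{uniformity in $x$} built into $(f_5)$ to get
\[
\frac{F\big(y_n,\|u_n\|_\alpha\,\tilde v_n(0)\big)}{\big(\|u_n\|_\alpha\,\tilde v_n(0)\big)^2}\,\tilde v_n(0)^2\longrightarrow+\infty,
\]
independently of where $y_n$ sits. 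Replacing your periodicity step by this single-term estimate repairs the argument within the stated hypotheses; everything else in your proposal matches the paper.
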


\begin{proof}
By Lemma \ref{vn}, we have the direct sum decomposition
$$H^\alpha=T_w(S_1)\oplus\mathbb{R}w,\quad w\in S_1.$$
Then the following proof of this lemma is the same as for Proposition 3.7 in \cite{HX}, we omit here.	
\end{proof}
\begin{lm}\label{lm3}
Let $(h_1)$ and $(f_1)$-$(f_5)$ hold. Then $I$ is coercive on $\mathcal{M}$.
\end{lm}
\begin{proof}
Let $\{u_n\}\subset\mathcal{M}$ be a sequence such that $I(u_n)\leq r$ uniformly. We claim that $\{u_n\}$ is bounded in $H^\alpha$.

In fact, if $\{u_n\}$ is unbounded in $H^\alpha$, by passing to a subsequence, we may assume that $\|u_n\|_{\alpha}\rightarrow+\infty$ as $n\rightarrow+\infty$. Let $v_n:=\frac{u_n}{\|u_n\|_{\alpha}}\in S_1$. Since $\{v_n\}$ is bounded in $H^\alpha$, we may assume that $v_n\rightharpoonup v $ in $H^\alpha$. By Lemma \ref{l01},  one gets that $\{v_n\}$ is bounded in $\ell^\infty(\mathbb{Z}^d)$. By diagonal principle, there exists a subsequence of $\{v_n\}$ (still denoted by itself) such that $$v_n\rightarrow v, \quad \text{pointwise in~}\mathbb{Z}^d.$$
If $v_n\rightarrow0$ in $\ell^p(\mathbb{Z}^d)$, by (\ref{nn}), for any $s>0$, $\sum\limits_{x\in\mathbb{Z}^d} F(x,sv_n)\,\rightarrow 0$. Hence we have
\begin{equation}\label{v0}
I(sv_n)=\frac{1}{2}s^2-\sum\limits_{x\in\mathbb{Z}^d} F(x,sv_n)\,\rightarrow\frac{1}{2}s^2,\quad n\rightarrow+\infty.	
\end{equation}
By (i) of Lemma \ref{l0}, for any $s>0$,
\begin{equation}\label{v1}
I(sv_n)\leq I(u_n)\leq r.	
\end{equation}
By choosing $s>\sqrt{2r+1}$, we get a contradiction from (\ref{v0}) and (\ref{v1}). Hence $v_n\not\rightarrow 0$ in $\ell^p(\mathbb{Z}^d) $. Since $p>2$, it follows from (Lions) Lemma \ref{l-0} that there exist a sequence $\{y_n\}\subset \mathbb{Z}^d$ and a constant $\delta>0$ such that
\begin{equation}\label{ja}
|v_n(y_n)|\geq \delta.
\end{equation}
Let  $\tilde{v}_{n}(x)=v_n(x+y_n).$ Then up to a subsequence, we have 
\begin{eqnarray*}
\tilde{v}_{n}\rightharpoonup \tilde{v}, \quad \text{in}~H^\alpha,\qquad \text{and~}\qquad
\tilde{v}_{n}\rightarrow \tilde{v}, \quad \text{pointwise~in}~\mathbb{Z}^d.
\end{eqnarray*}
By (\ref{ja}), we have $\tilde{v}(0)\neq0$. 
Note that $\|u_n\|_{\alpha}|\tilde{v}_{n}(0)|\rightarrow+\infty$, by $(f_5)$, we get that
\begin{eqnarray*}
 0\leq\frac{I(u_n)}{\|u_n\|_{\alpha}^2}&=&\frac{1}{2}-\sum\limits_{x\in\mathbb{Z}^d}\frac{F(x,u_n)}{\|u_n\|_{\alpha}^2}\,\\&=&\frac{1}{2}-\sum\limits_{x\in\mathbb{Z}^d}\frac{F(x+y_n,\|u_n\|_{\alpha}\tilde{v}_n)}{\|u_n\|_{\alpha}^2\tilde{v}^2_n}\tilde{v}^2_n\,\\&\leq&\frac{1}{2}-\frac{F(y_n,\|u_n\|_{\alpha}\tilde{v}_n(0))}{\|u_n\|_{\alpha}^2\tilde{v}^2_n(0)}\tilde{v}^2_n(0)\\&\rightarrow &-\infty.
\end{eqnarray*}
This is a contradiction. Hence, $\{u_n\}$ is bounded in $H^\alpha$.

\end{proof}

{\bf Proof of Theorem \ref{t-0}} Let $\{w_n\}\subset S_1$ be a minimizing sequence satisfying $\Psi(w_n)\rightarrow \inf\limits_{S_1}\Psi.$ By the Ekeland variational principle, see \cite{S}, we may assume that $\Psi'(w_n)\rightarrow 0$. Then $\{w_n\}$ is a $(PS)_{\inf\limits_{S_1}\Psi}$ sequence for $\Psi$. Let $u_n=m(w_n)\in\mathcal{M}$. By (i) of Lemma \ref{lm1}, we get that 
\begin{equation}\label{g1}
I'(u_n)\rightarrow 0,\quad n\rightarrow +\infty,	
\end{equation}
and $$I(u_n)=\Psi(w_n)\rightarrow c=\inf\limits_{\mathcal{M}}I=\inf\limits_{S_1}\Psi.$$
By Lemma \ref{lm3}, $\{u_n\}$ is bounded in $H^\alpha$. Up to a subsequence, we assume that
\begin{eqnarray*}
u_n\rightharpoonup u \quad\text{in}~H^\alpha, \qquad \text{and}\qquad  u_n	\rightarrow u,\quad\text{pointwise~in}~\mathbb{Z}^d.
\end{eqnarray*}
Then it follows from (\ref{nn}) that, for any $v\in H^\alpha$,
\begin{eqnarray}\label{g6}
\langle I'(u_n),v\rangle&=&(u_n,v)_\alpha-\sum\limits_{x\in\mathbb{Z}^d} f(x,u_n)v(x)\,\nonumber\\&\rightarrow& (u,v)_\alpha-\sum\limits_{x\in\mathbb{Z}^d} f(x,u)v(x)\,\nonumber\\&=&\langle I'(u),v\rangle.	
\end{eqnarray}
By (\ref{g1}), we get that $I'(u)=0$. In the following, we shall discuss two cases, one is that $u\neq0$, and hence it is a minimizer. The other one is  $u=0$, then by (Lions) Lemma \ref{l-0} and the periodicity of (\ref{01}), we can still find a minimizer. 

{\bf Case 1:} $u\neq 0$. So $u\in\mathcal{M}$ and hence $I(u)\geq c$. Let
\begin{eqnarray*}\label{g2}
g(x,u)=	\frac{1}{2}f(x,u)u-F(x,u).
\end{eqnarray*}
By (\ref{30}), we have $g(x,u)>0.$  Then by Fatou lemma, we get that
\begin{eqnarray*}\label{g3}
\sum\limits_{x\in\mathbb{Z}^d} g(x,u)\,\leq\underset{n\rightarrow+\infty}{\underline{\lim}}\sum\limits_{x\in\mathbb{Z}^d} g(x,u_n)\,.
\end{eqnarray*}
Note that
\begin{eqnarray*}\label{g3}
	c+o(1)=I(u_n)-\frac{1}{2}\langle I'(u_n),u_n\rangle=\sum\limits_{x\in\mathbb{Z}^d} g(x,u_n)\,
\end{eqnarray*}
and
\begin{eqnarray*}
I(u)=I(u)-\frac{1}{2}\langle I'(u),u\rangle=\sum\limits_{x\in\mathbb{Z}^d} g(x,u)\,.
\end{eqnarray*}
Then $I(u)\leq c$. Hence $I(u)=c.$

{\bf Case 2:} $u=0.$ If $u_n\rightarrow0$ in $\ell^p(\mathbb{Z}^d)$, then it follows from (\ref{nn}) and (\ref{g1}) that
\begin{eqnarray*}
\|u_n\|_{\alpha}^2&=&\langle I'(u_n),u_n\rangle+\sum\limits_{x\in\mathbb{Z}^d} f(x,u_n)u_n(x)\,\\&\rightarrow &0,\quad n\rightarrow+\infty.
\end{eqnarray*}
This contradicts the fact that $\{u_n\}\subset\mathcal{M}$ is bounded away from 0, see Lemma \ref{l91} (ii). Hence $u_n\not\rightarrow 0$ in $\ell^p(\mathbb{Z}^d)$. Then by (Lions) Lemma \ref{l-0}, there exist $x_n\in \mathbb{Z}^d$ and $\delta_0>0$ such that
\begin{equation}\label{g5}
|u_n(x_n)|\geq \delta_0.	
\end{equation}
Denote $\tilde{u}_{n}(x)=u_n(x+x_n).$ Then passing to a subsequence if necessary, we assume that
\begin{eqnarray*}
\tilde{u}_{n}\rightharpoonup \tilde{u}, \quad \text{in}~H^\alpha,\qquad \text{and~}\qquad
\tilde{u}_{n}\rightarrow \tilde{u}, \quad \text{pointwise~in}~\mathbb{Z}^d.
\end{eqnarray*}
By (\ref{g5}), we have $\tilde{u}\neq0$. We claim that $\tilde{u}\in\mathcal{M}$.

In fact, by ($f_6$), one sees that $I,\,I'$ and $\mathcal{M}$ are invariant under translation. Therefore, we have $\tilde{u}_n\subset\mathcal{M}$ and 
$$I(\tilde{u}_n)\rightarrow c,\qquad\text{and}\qquad I'(\tilde{u}_n)\rightarrow 0.$$
Then similar to (\ref{g6}), we get that $I'(\tilde{u})=0$. This implies that $\tilde{u}\in\mathcal{M}$. By similar arguments as in {\bf Case 1}, we obtain that $I(\tilde{u})=c$.

 \qed

\section{Multiplicity of solutions}

In this section, we are devoted to looking for infinitely many geometrically distinct solutions for the equation (\ref{01}). From now on, we assume that $f$ is odd in $u$ besides $(h_1)$ and $(f_1)$-$(f_6)$. Note that the results in Section 3 still hold. Recall that $m^{-1}$ is given by
$$m^{-1}:\mathcal{M}\rightarrow S_1,\quad m^{-1}(u)=\frac{u}{\|u\|_\alpha}.$$

\begin{lm}\label{l99}
The map $m^{-1}$ is Lipschitz continuous.	
\end{lm}
\begin{proof}
 For any $u,\,v\in\mathcal{M}$, by Lemma \ref{l91} (ii), we get that 
\begin{eqnarray*}
\left\|m^{-1}(u)-m^{-1}(v)\right\|_\alpha&=&\left\|\frac{u}{\|u\|_\alpha}-\frac{v}{\|v\|_\alpha}\right\|_\alpha	\\&=&\left\|\frac{u-v}{\|u\|_\alpha}+\frac{(v-u)v}{\|u\|_\alpha\|v\|_\alpha}\right\|_\alpha\\&\leq&\frac{2}{\|u\|_\alpha}\|u-v\|_\alpha\\&\leq&\sqrt{\frac{2}{c}}	\|u-v\|_\alpha.
\end{eqnarray*}

\end{proof}
\begin{rem}\label{93}
Note that $m,\,m^{-1},\,I$ and $\Psi$ are invariant with respect to the action of $\mathbb{Z}^d$, then Lemma \ref{lm1} implies that there is a one-to-one correspondence between the orbits $\mathcal{O}(u)\subset\mathcal{M}$ consisting of critical points of $I$ and the orbits $\mathcal{O}(w)\subset S_1$ consisting of critical points of $\Psi$.
\end{rem}

For $r\geq s\geq c$, let
\begin{eqnarray*}
\begin{array}{ll}
I^r:=\{u\in\mathcal{M}: I(u)\leq r\},\quad I_s:=\{u\in\mathcal{M}: I(u)\geq s\},\\\I^r_s:=I^r\cap I_s;\\	
\Psi^r:=\{w\in S_1: \Psi(w)\leq r\},\quad \Psi_s:=\{w\in S_1: \Psi(w)\geq s\},\\\Psi^r_s:=\Psi^r\cap \Psi_s;\\K:=\{w\in S_1: \Psi'(w)=0\},\quad K_r:=\{w\in K: \Psi(w)=r\},\\\nu(r):=\sup\{\|u\|_\alpha: u\in I^r\}.	
\end{array}
\end{eqnarray*}

One gets easily that $\nu(r)<+\infty$ since $I$ is coercive by Lemma \ref{lm3}. We may choose a subset $\mathcal{H}$ of $K$ such that $\mathcal{H}=-\mathcal{H}$ and each orbit $\mathcal{O}(w)\subset K$ has a unique representative in $\mathcal{H}$. By Remark \ref{93}, it suffices to show that the set $\mathcal{H}$ is infinite. Arguing by contradiction, we assume that
\begin{equation}\label{84}
\mathcal{H}~ \text{is a finite set}.	
\end{equation}

\begin{lm}\label{l98}
$k:=\inf\{\|v-w\|_\alpha: v,\,w\in K,\, v\neq w\}>0.$
\end{lm}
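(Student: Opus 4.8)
The plan is to prove the positive separation of distinct critical points by combining the finiteness assumption (\ref{84}) with the invariance under the $\mathbb{Z}^d$-action and the Lipschitz continuity of $m^{-1}$ established in Lemma \ref{l99}. The key observation is that, under the contradiction hypothesis that $\mathcal{H}$ is finite, the full critical set $K$ is the union of finitely many orbits $\mathcal{O}(w_1),\dots,\mathcal{O}(w_N)$ under the discrete group $\mathbb{Z}^d$, where $w_1,\dots,w_N\in\mathcal{H}$ are the finitely many orbit representatives. Thus every element of $K$ has the form $w_j(\cdot - y)$ for some $1\le j\le N$ and some $y\in\mathbb{Z}^d$.

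First I would reduce the infimum to a discrete, essentially finite computation. Take two distinct points $v,w\in K$; write $v=w_i(\cdot-y)$ and $w=w_j(\cdot-z)$ with representatives $w_i,w_j\in\mathcal{H}$ and translations $y,z\in\mathbb{Z}^d$. Using that the norm $\|\cdot\|_\alpha$ is invariant under the $\mathbb{Z}^d$-action (which follows from the periodicity $(f_6)$, as already noted for $I$, $I'$ and $\mathcal{M}$), I would translate by $-z$ to obtain $\|v-w\|_\alpha=\|w_i(\cdot-(y-z))-w_j\|_\alpha$. Hence the infimum over all distinct pairs equals the infimum over $1\le i,j\le N$ and over $t:=y-z\in\mathbb{Z}^d$ of $\|w_i(\cdot-t)-w_j\|_\alpha$, excluding the single degenerate case $i=j,\ t=0$ which gives $v=w$.

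Next I would show each such infimum is bounded below by a positive constant. For fixed $i,j$, I would argue that the quantity $\|w_i(\cdot-t)-w_j\|_\alpha$ cannot approach zero as $|t|\to+\infty$: since $w_i,w_j\in S_1$ lie in $\ell^2(\mathbb{Z}^d)$ with $\|\cdot\|_\alpha$ equivalent to $\|\cdot\|_2$ by Remark \ref{os}, the translate $w_i(\cdot-t)$ "escapes to infinity" and converges weakly to $0$, so $\|w_i(\cdot-t)-w_j\|_2^2\to\|w_i\|_2^2+\|w_j\|_2^2$, which is bounded away from zero because $w_i,w_j\in S_1$ have norm one in $\|\cdot\|_\alpha$ and hence are nonzero in $\ell^2$. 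Therefore only finitely many translates $t$ (those with $|t|$ below some threshold) can make the difference small, and for each such finite list of pairs $(i,j,t)$ with the difference nonzero, the value $\|w_i(\cdot-t)-w_j\|_\alpha$ is a strictly positive number. The overall infimum $k$ is then a minimum over a finite set of positive numbers, hence positive.

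The main obstacle is making rigorous the claim that the difference of two distinct critical points stays uniformly away from zero, \emph{including} the near-diagonal case where representatives coincide ($i=j$) but $t\ne0$: here one must rule out that $w_i(\cdot-t)=w_i$ could force the difference to zero, i.e. one needs that no nonzero critical point is invariant under a nontrivial translation, or at least that distinct translates remain separated. This is handled by the same escape-to-infinity argument for large $|t|$ and, for the finitely many small $|t|\ne 0$, by the simple fact that $w_i(\cdot-t)\ne w_i$ as elements of $H^\alpha$ whenever the pair represents genuinely distinct critical points, so their difference is a fixed nonzero vector with positive norm. The finiteness hypothesis (\ref{84}) is precisely what collapses the a priori infinite infimum into a minimum over finitely many strictly positive quantities, yielding $k>0$.
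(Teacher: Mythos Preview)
Your argument is correct and follows essentially the same route as the paper: reduce via translation invariance and the finiteness assumption (\ref{84}) to pairs $w_i(\cdot-t),w_j$ with $w_i,w_j\in\mathcal{H}$, then use that translates converge weakly to zero as $|t|\to\infty$ to handle the unbounded range of $t$. The paper packages this slightly more compactly by taking a minimizing sequence, extracting a subsequence with fixed $v,w\in\mathcal{H}$ and either $z_n\equiv z$ or $|z_n|\to\infty$, and in the latter case invoking weak lower semicontinuity to get $k\ge\|v\|_\alpha=1$; your explicit ``large $|t|$ / small $|t|$'' split is equivalent.
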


\begin{proof}
Let $v_n,\,w_n\in\mathcal{H}$ and $x_n,\,y_n\in\mathbb{Z}^d$ satisfying $$\left\|v_n(\cdot-x_n)-w_n(\cdot-y_n)\right\|_\alpha\rightarrow k,\quad n\rightarrow+\infty,$$
where $v_n(\cdot-x_n)\neq w_n(\cdot-y_n)$ for all $n$.  Denote $z_n=x_n-y_n$. Passing to a subsequence, $v_n=v\in\mathcal{H},\,w_n=w\in\mathcal{H}$ and either $z_n=z\in\mathbb{Z}^d$ for all $n$ or $|z_n|\rightarrow+\infty.$ In the first case,
$$0<\left\|v_n(\cdot-x_n)-w_n(\cdot-y_n)\right\|_\alpha=\left\|v-w(\cdot-z)\right\|_\alpha=k.$$

In the second case, we have $w(\cdot-z_n)\rightharpoonup 0$. Therefore, $$k=\lim\limits_{n\rightarrow+\infty}\left\|v-w(\cdot-z_n)\right\|_\alpha\geq \|v\|_\alpha=1.$$

\end{proof}

The lemma below plays a key role in the discreteness property of the $(PS)$ sequences.

\begin{lm}\label{l88}
Let $r\geq c$. If $\{w^1_n\},\,\{w^2_n\}\subset\Psi^r$ are two $(PS)$ sequences of $\Psi$	, then either $\|w^1_n-w^2_n\|_\alpha\rightarrow0$ or $\underset{n\rightarrow+\infty}{\overline{\lim}}\|w^1_n-w^2_n\|_\alpha\geq\omega(r)>0$, where $\omega(r)$ depends on $r$ but not on the particular choice of the $(PS)$ sequences.

\end{lm}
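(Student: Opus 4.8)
The plan is to transport the whole problem onto the Nehari manifold and reduce the dichotomy for $\{w_n^i\}$ to a concentration dichotomy for the images $u_n^i := m(w_n^i)$. By Lemma \ref{lm1}(i), each $\{u_n^i\}$ is a $(PS)$ sequence of $I$ contained in $I^r$, so by Lemma \ref{lm3} it is bounded, and Lemma \ref{l91}(ii) together with the finiteness of $\nu(r)$ give $\sqrt{2c}\le\|u_n^i\|_\alpha\le\nu(r)<+\infty$. Since $w_n^i=m^{-1}(u_n^i)=u_n^i/\|u_n^i\|_\alpha$ and the norms are bounded away from $0$, the computation behind Lemma \ref{l99} yields $\|w_n^1-w_n^2\|_\alpha\le\frac{2}{\sqrt{2c}}\|u_n^1-u_n^2\|_\alpha$, so it suffices to analyze $\|u_n^1-u_n^2\|_\alpha$, and I would split on whether $u_n^1-u_n^2\to 0$ in $\ell^p(\mathbb{Z}^d)$.

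If $u_n^1-u_n^2\to 0$ in $\ell^p(\mathbb{Z}^d)$, I test the almost-vanishing derivatives against the difference and expand
$$\|u_n^1-u_n^2\|_\alpha^2=\langle I'(u_n^1)-I'(u_n^2),u_n^1-u_n^2\rangle+\sum_{x\in\mathbb{Z}^d}\big(f(x,u_n^1)-f(x,u_n^2)\big)(u_n^1-u_n^2).$$
The first term tends to $0$ because $I'(u_n^i)\to 0$ while the difference stays bounded; the second tends to $0$ by (\ref{nn}), Hölder's inequality and the $\ell^p$-smallness of $u_n^1-u_n^2$. Hence $\|u_n^1-u_n^2\|_\alpha\to 0$, and the estimate of the previous paragraph gives $\|w_n^1-w_n^2\|_\alpha\to 0$, which is the first alternative.

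In the complementary case $u_n^1-u_n^2\not\to 0$ in $\ell^p$, I would apply the discrete Lions Lemma \ref{l-0} to obtain $\{y_n\}\subset\mathbb{Z}^d$ and $\delta>0$ with $|(u_n^1-u_n^2)(y_n)|\ge\delta$. Translating by $y_n$ and using that by $(f_6)$ the quantities $I$, $I'$ and $\|\cdot\|_\alpha$ are invariant under lattice translation, the sequences $\tilde u_n^i:=u_n^i(\cdot+y_n)$ remain bounded $(PS)$ sequences; passing to a subsequence, $\tilde u_n^i\rightharpoonup\bar u^i$ weakly and pointwise, and the convergence argument behind (\ref{g6}) gives $I'(\bar u^i)=0$, while the $\delta$-bound at the origin forces $\bar u^1(0)\ne\bar u^2(0)$, so $\bar u^1\ne\bar u^2$. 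Along a further subsequence $\|u_n^i\|_\alpha\to s^i\in[\sqrt{2c},\nu(r)]$, whence $\tilde w_n^i=\tilde u_n^i/\|u_n^i\|_\alpha\rightharpoonup\bar u^i/s^i=:\bar w^i$, and by weak lower semicontinuity of the norm together with translation invariance, $\overline{\lim}_{n\rightarrow+\infty}\|w_n^1-w_n^2\|_\alpha=\overline{\lim}_{n\rightarrow+\infty}\|\tilde w_n^1-\tilde w_n^2\|_\alpha\ge\|\bar w^1-\bar w^2\|_\alpha$.

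It remains to bound $\|\bar w^1-\bar w^2\|_\alpha$ below by a constant depending only on $r$, and this is the crux of the argument. If one limit, say $\bar u^2$, vanishes, then $\bar w^2=0$ while the nonzero critical point $\bar u^1$ lies in $\mathcal{M}$ with $\|\bar u^1\|_\alpha\ge\sqrt{2c}$ (Lemma \ref{l91}(ii)), so $\|\bar w^1-\bar w^2\|_\alpha=\|\bar u^1\|_\alpha/s^1\ge\sqrt{2c}/\nu(r)$. If both are nonzero, each $\bar u^i\in\mathcal{M}$ is a critical point, so its Nehari normalization $\hat w^i:=\bar u^i/\|\bar u^i\|_\alpha$ lies in $K$, and $\hat w^1\ne\hat w^2$ (otherwise $\bar u^1,\bar u^2$ would lie on a common ray and the uniqueness in Lemma \ref{l0}(i) would force $\bar u^1=\bar u^2$); hence $\|\hat w^1-\hat w^2\|_\alpha\ge k>0$ by Lemma \ref{l98}. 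Writing $\bar w^i=t^i\hat w^i$ with $t^i=\|\bar u^i\|_\alpha/s^i\in[\sqrt{2c}/\nu(r),1]$, the elementary identity $\|t^1\hat w^1-t^2\hat w^2\|_\alpha^2=(t^1-t^2)^2+t^1t^2\|\hat w^1-\hat w^2\|_\alpha^2$ gives $\|\bar w^1-\bar w^2\|_\alpha\ge\frac{\sqrt{2c}}{\nu(r)}\,k$. Taking $\omega(r):=\frac{\sqrt{2c}}{\nu(r)}\min\{k,1\}$, which depends on $r$ only through $\nu(r)$ and otherwise only on the universal gap $k$, closes both subcases and gives the second alternative. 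The main obstacle is precisely this last step: one must rule out that the distinct profiles $\bar u^1,\bar u^2$ become indistinguishable after renormalization, since the limiting scales $s^i$ may differ and mass may be lost in the weak limit ($\|\bar u^i\|_\alpha\le s^i$); reconciling this requires simultaneously the uniform lower bound $\sqrt{2c}$ on Nehari norms, the coercivity bound $\nu(r)$, the discreteness $k>0$ of $K$, and the ray-uniqueness of Lemma \ref{l0}(i), with the displayed inner-product identity converting the $S_1$-gap $k$ into a genuine gap for the scaled vectors.
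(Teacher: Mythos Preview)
Your proof is correct and follows essentially the same approach as the paper: transport to $\mathcal{M}$ via $m$, split on whether $u_n^1-u_n^2\to 0$ in $\ell^p$, use the discrete Lions lemma plus translation invariance in the non-vanishing case, and bound the normalized gap from below via the discreteness constant $k$ from Lemma~\ref{l98}. Your explicit inner-product identity $\|t^1\hat w^1-t^2\hat w^2\|_\alpha^2=(t^1-t^2)^2+t^1t^2\|\hat w^1-\hat w^2\|_\alpha^2$ is precisely the ``elementary geometric argument'' the paper invokes without detail, and your choice $\omega(r)=\frac{\sqrt{2c}}{\nu(r)}\min\{k,1\}$ is in fact slightly more careful than the paper's (which tacitly needs $k\le 1$ in the subcase where one limit vanishes).
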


\begin{proof}
Let $u^1_n:=m(w^1_n)$ and $u^2_n:=m(w^2_n)$. Then $\{u^1_n\},\,\{u^2_n\}\subset I^r$ are the $(PS)$ sequences of $I$ and bounded in $H^\alpha$ since $I$ is coercive on $\mathcal{M}$.

{\bf Case 1:} $\|u^1_n-u^2_n\|_p\rightarrow 0$ as $n\rightarrow+\infty$. Note that $\{u^1_n\},\,\{u^2_n\}\subset I^r$ are bounded $(PS)$ sequences of $I$. Then it follows from (\ref{nn}), Lemma \ref{l01} and H\"{o}lder inequality that, 
\begin{eqnarray*}
\|u^1_n-u^2_n\|^2_\alpha&=&\langle I'(u^1_n),u^1_n-u^2_n\rangle-\langle I'(u^2_n),u^1_n-u^2_n\rangle	\\&&+\sum\limits_{x\in\mathbb{Z}^d} f(x,u^1_n)(u^1_n-u^2_n)\,-\sum\limits_{x\in\mathbb{Z}^d} f(x,u^2_n)(u^1_n-u^2_n)\,\\&\leq&\varepsilon\|u^1_n-u^2_n\|_\alpha+\varepsilon\sum\limits_{x\in\mathbb{Z}^d}
(|u^1_n|+|u^2_n|)|u^1_n-u^2_n|\,\\&&+C_\varepsilon\sum\limits_{x\in\mathbb{Z}^d}
(|u^1_n|^{p-1}+|u^2_n|^{p-1})|u^1_n-u^2_n|\,\\&\leq&(1+C)\varepsilon\|u^1_n-u^2_n\|_\alpha+C\|u^1_n-u^2_n\|_p,\quad n\geq n_\varepsilon.
\end{eqnarray*}
Hence $\|u^1_n-u^2_n\|_\alpha\rightarrow 0$. Then by Lemma \ref{l99}, we get that
$$\|w^1_n-w^2_n\|_\alpha=\left\|m^{-1}(u^1_n)-m^{-1}(u^2_n)\right\|_\alpha\leq\sqrt{\frac{2}{c}}\|u^1_n-u^2_n\|_\alpha\rightarrow 0.$$

{\bf Case 2:} $\|u^1_n-u^2_n\|_p\not\rightarrow 0$ as $n\rightarrow+\infty$.  Since $p>2$, it follows from (Lions) Lemma \ref{l-0} that there exist $y_n\in\mathbb{Z}^d$ and $\delta>0$ such that
\begin{equation}\label{pp}
|(u^1_n-u^2_n)(y_n)|\geq \delta.
\end{equation}
Note that $m,\,m^{-1},\,I'$ and $\Psi'$ are all invariant under the translations of the form $u\mapsto u(\cdot-x)$ with $x\in\mathbb{Z}^d$, we may assume that $\{y_n\}$ is bounded in $\mathbb{Z}^d$. Without loss of generality, we assume that
\begin{eqnarray*}
u^1_n\rightharpoonup u^1,\quad u^2_n\rightharpoonup u^2,\quad
\|u^1_n\|_\alpha\rightarrow \sigma_1,\quad \|u^2_n\|_\alpha\rightarrow \sigma_2.
\end{eqnarray*}
Similar to (\ref{g6}), we have $I'(u^1)=I'(u^2)=0$. By (\ref{pp}), one gets that $u^1\neq u^2.$ Moreover, by Lemma \ref{l91} (ii) and the definition of $\nu$, we have
$$\sqrt{2c}\leq\sigma_i\leq\nu(r),\quad i=1,2.$$

We first consider the case $u^1\neq0$ and $u^2\neq0$. Hence $u^1,\,u^2\in\mathcal{M}$ and $$w^1:=m^{-1}(u^1)\in K,\quad w^2:=m^{-1}(u^2)\in K,\quad w^1\neq w^2.$$
By the definition of $\Psi^r$ and the weak lower semicontinuity of the norm, we have
\begin{eqnarray*}
\underset{n\rightarrow+\infty}{\underline{\lim}}\|w^1_n-w^2_n\|_\alpha&=	&\underset{n\rightarrow+\infty}{\underline{\lim}}\left\|\frac{u^1_n}{\|u^1_n\|_\alpha}-\frac{u^2_n}{\|u^2_n\|_\alpha}\right\|_\alpha\\&\geq&\left\|\frac{u^1}{\sigma_1}-\frac{u^2}{\sigma_2}\right\|_\alpha\\&=&\left\|\beta_1w^1-\beta_2w^2\right\|_\alpha,
\end{eqnarray*}
where $\beta_i=\frac{\|u^i\|_\alpha}{\sigma_i}\geq\frac{\sqrt{2c}}{\nu(r)}, i=1,2.$ Since $\|w^1\|_\alpha=\|w^2\|_\alpha=1$, an elementary geometric argument and the inequalities above imply that
\begin{equation}\label{kk}
\underset{n\rightarrow+\infty}{\underline{\lim}}\|w^1_n-w^2_n\|_\alpha\geq\left\|\beta_1w^1-\beta_2w^2\right\|_\alpha\geq\min\{\beta_1,\beta_2\}\left\|w^1-w^2\right\|_\alpha\geq\frac{k\sqrt{2c}}{\nu(r)}	,
\end{equation}
where $k$ is defined in Lemma \ref{l98}. Let $\omega(r)=\frac{k\sqrt{2c}}{\nu(r)}$ in (\ref{kk}), we get the desired result.

If $u^2=0$, then $u^1\neq 0$ and 
$$\underset{n\rightarrow+\infty}{\underline{\lim}}\|w^1_n-w^2_n\|_\alpha=	\underset{n\rightarrow+\infty}{\underline{\lim}}\left\|\frac{u^1_n}{\|u^1_n\|_\alpha}-\frac{u^2_n}{\|u^2_n\|_\alpha}\right\|_\alpha\geq\left\|\frac{u^1}{\sigma_1}\right\|_\alpha\geq\frac{\sqrt{2c}}{\nu(r)}.$$  

The case $u^1=0$ is treated similarly. The proof is completed.

\end{proof}

Since $\Psi \in C^1(S_1,\mathbb{R})$, by \cite[Lemma II.3.9]{S}, one gets that $\Psi$ admits a pseudo-gradient vector field,  namely, there exists a Lipschitz continuous map $H: S_1\backslash K\rightarrow T(S_1)$ with $H(w)\in T_w(S_1)$ for all $w\in S_1\backslash K$ and
\begin{equation}\label{lc}
\|H(w)\|_\alpha<2\|\nabla \Psi(w)\|_{T^*_w(S_1)},\quad (\nabla \Psi(w), H(w))>\frac{1}{2}\|\nabla \Psi(w)\|_{T^*_w(S_1)}
^2,\quad w\in S_1\backslash K,	
\end{equation}
where the gradient of $\Psi$ at $w\in S_1$ is defined by $$(\nabla\Psi(w),z):=\langle \Psi'(w),z\rangle,\quad z\in T_w(S_1).$$
Let $\eta:\mathcal{G}\rightarrow S_1\backslash K$ be the corresponding flow defined by
\begin{equation}\label{xq}
\left\{
\begin{array}{ll}
\frac{d}{dt}\eta(t,w)=-H(\eta(t,w)),\\
\eta(0,w)=w,	
\end{array}	
\right.
\end{equation}
where $$\mathcal{G}=\{(t,w):w\in S_1\backslash K, T^-(w)<t<T^+(w)\}$$
and $T^-(w)<0, T^+(w)>0$ are the maximal existence times of the trajectory $t\rightarrow\eta(t,w)$ in negative and positive direction. Note that $\Psi$ is strictly decreasing along trajectories of $\eta.$

For deformation-type arguments, the lemma below is crucial.
\begin{lm}\label{lv}
For any $w\in S_1$, the limit $\lim\limits_{t\rightarrow T^+(w)}\eta(t,w)$ exists and is a critical 	point of $\Psi$.
\end{lm}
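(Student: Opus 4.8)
The plan is to prove that the trajectory $t\mapsto\eta(t,w)$ is Cauchy in $H^\alpha$ as $t\to T^+(w)$, so that it converges strongly to some $\bar w\in S_1$, and then to identify $\bar w$ as a critical point of $\Psi$ via continuity of $\nabla\Psi$. If $w\in K$ the flow is stationary and the statement is trivial, so assume $w\in S_1\backslash K$ throughout; below I write $\|\cdot\|_*$ for the cotangent norm $\|\cdot\|_{T^*_w(S_1)}$ appearing in (\ref{lc}).

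First I would record the gradient decay. Along (\ref{xq}), the second inequality in (\ref{lc}) gives $\frac{d}{dt}\Psi(\eta(t,w))=-(\nabla\Psi(\eta),H(\eta))<-\tfrac12\|\nabla\Psi(\eta)\|_*^2\leq 0$, so $\Psi(\eta(t,w))$ is strictly decreasing. Since $\Psi(\eta(t,w))=I(m(\eta(t,w)))\geq\inf_{\mathcal{M}}I=c$, it is bounded below, hence $d:=\lim_{t\to T^+(w)}\Psi(\eta(t,w))$ exists with $d\geq c$, and integrating yields the key energy bound $\tfrac12\int_0^{T^+(w)}\|\nabla\Psi(\eta(t,w))\|_*^2\,dt\leq\Psi(w)-d<\infty$. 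If $T^+(w)<\infty$, then $m(\eta(t,w))\in I^{\Psi(w)}$ stays bounded by coercivity (Lemma \ref{lm3}), so $\|\nabla\Psi(\eta(t,w))\|_*\leq M$ along the trajectory; using $\|H(\eta)\|_\alpha\leq 2\|\nabla\Psi(\eta)\|_*$ from (\ref{lc}) we get $\|\eta(s,w)-\eta(t,w)\|_\alpha\leq 2M(t-s)\to 0$, so the trajectory is Cauchy. The genuine difficulty is $T^+(w)=\infty$, where the energy bound only forces $\liminf_{t\to\infty}\|\nabla\Psi(\eta(t,w))\|_*=0$, producing $t_n\to\infty$ with $\{\eta(t_n,w)\}$ a $(PS)_d$ sequence of $\Psi$, but does not bound the length $\int_0^\infty\|\nabla\Psi(\eta)\|_*\,dt$, so Cauchyness is not automatic.

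The heart of the argument, and the step I expect to be the main obstacle, is to rule out oscillation in the case $T^+(w)=\infty$ using the discreteness Lemma \ref{l88}. Suppose the trajectory is not Cauchy; then there is $\epsilon\in(0,\omega(d))$ and times tending to $\infty$ along which $\eta$ oscillates by at least $\epsilon$. Choosing $a_n\to\infty$ with $\|\nabla\Psi(\eta(a_n,w))\|_*\to 0$ and a later time $b_n$ with $\|\eta(b_n,w)-\eta(a_n,w)\|_\alpha\geq\epsilon$, the Cauchy--Schwarz estimate $\epsilon\leq 2\int_{a_n}^{b_n}\|\nabla\Psi(\eta)\|_*\leq 2\sqrt{b_n-a_n}\,\bigl(\int_{a_n}^{b_n}\|\nabla\Psi(\eta)\|_*^2\bigr)^{1/2}$ together with the vanishing of the tail integral forces $b_n-a_n\to\infty$. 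Then by Chebyshev the superlevel set $\{t\in[a_n,b_n]:\|\nabla\Psi(\eta(t))\|_*>\delta\}$ has measure $\to 0$ for each $\delta$, while the displacement accrued there is negligible (Cauchy--Schwarz again); hence one may select $c_n\in[a_n,b_n]$ with $\|\nabla\Psi(\eta(c_n,w))\|_*\to 0$ and $\|\eta(c_n,w)-\eta(a_n,w)\|_\alpha$ confined to a fixed subinterval of $(0,\epsilon)$. This yields two $(PS)$ sequences $\{\eta(a_n,w)\},\{\eta(c_n,w)\}\subset\Psi^{\Psi(w)}$ whose separation converges to some $\ell\in(0,\omega(d))$, contradicting Lemma \ref{l88}. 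The delicate point is arranging both endpoints to be honest $(PS)$ sequences while keeping their separation strictly between $0$ and $\omega(d)$; this is precisely where the quadratic energy bound and the threshold $\omega(d)$ must be balanced.

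Finally, once the trajectory is Cauchy it converges strongly in $H^\alpha$ to some $\bar w$, and $\bar w\in S_1$ since $S_1$ is closed. Applying this to the $(PS)$ subsequence gives $\eta(t_n,w)\to\bar w$ with $\nabla\Psi(\eta(t_n,w))\to 0$, and since $\nabla\Psi$ is continuous (because $\Psi\in C^1(S_1,\mathbb{R})$) we conclude $\nabla\Psi(\bar w)=0$, i.e. $\bar w\in K$. Therefore $\lim_{t\to T^+(w)}\eta(t,w)=\bar w$ exists and is a critical point of $\Psi$, as claimed.
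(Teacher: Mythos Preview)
Your proposal follows the same overall strategy as the paper: split into $T^+(w)<\infty$ and $T^+(w)=\infty$, and in the latter case rule out oscillation by producing two $(PS)$ sequences of $\Psi$ whose separation lies strictly between $0$ and the threshold from Lemma~\ref{l88}. The differences are tactical, and two of your shortcuts deserve comment.

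In Case $T^+(w)<\infty$, the paper does not bound $\|\nabla\Psi\|_*$ at all; it uses Cauchy--Schwarz directly on $\|H\|_\alpha\le 2\sqrt{2}\sqrt{(\nabla\Psi,H)}$ to get $\|\eta(t,w)-\eta(s,w)\|_\alpha\le 2\sqrt{2(t-s)}[\Psi(w)-c]^{1/2}$, which is cleaner than your route through coercivity. More importantly, your criticality argument in the final paragraph relies on a $(PS)$ subsequence that you only produce when $T^+(w)=\infty$; for $T^+(w)<\infty$ the correct reason is the one the paper gives: if the limit were not critical, the locally Lipschitz vector field $H$ would allow extending the trajectory past $T^+(w)$, contradicting maximality.

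In Case $T^+(w)=\infty$, the paper bypasses your Chebyshev/selection step. Starting from oscillation times $t_n$ with $\|\eta(t_n,w)-\eta(t_{n+1},w)\|_\alpha=\varepsilon<\tfrac12\omega(r)$ (here $r=\Psi(w)$, not your $d$, since Lemma~\ref{l88} is applied on $\Psi^r$), it takes the smallest $t_n^1>t_n$ with $\|\eta(t_n^1,w)-\eta(t_n,w)\|_\alpha=\varepsilon/3$. The estimate $\varepsilon/3\le\frac{4}{\kappa_n}[\Psi(\eta(t_n,w))-\Psi(\eta(t_n^1,w))]$ with $\kappa_n=\min_{[t_n,t_n^1]}\|\nabla\Psi\|_*$ forces $\kappa_n\to 0$, yielding a $(PS)$ point $w_n^1$ within $\varepsilon/3$ of $\eta(t_n,w)$; symmetrically one gets $w_n^2$ within $\varepsilon/3$ of $\eta(t_{n+1},w)$, so $\varepsilon/3\le\|w_n^1-w_n^2\|_\alpha<2\varepsilon<\omega(r)$. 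This $\varepsilon/3$ trick gives both $(PS)$ sequences and the two-sided separation bound in one stroke, whereas your sketch (``one may select $c_n$ with separation confined to a fixed subinterval of $(0,\epsilon)$'') leaves the simultaneous control of $\|\nabla\Psi(\eta(c_n,w))\|_*\to 0$ and of the distance unjustified as written.
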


\begin{proof}
Fix $w\in S_1$ and let $r=\Psi(w).$	

{\bf Case 1:} $T^+(w)<+\infty.$ For $0\leq s<t<T^+(w)$, by (\ref{lc}), (\ref{xq}) and $\inf\limits_{S_1}\Psi=c,$ we have
\begin{eqnarray*}
\|\eta(t,w)-\eta(s,w)\|_\alpha&\leq	&\int^t_s\|H(\eta(t,w))\|_\alpha\,d\tau\\&\leq&2\sqrt{2}\int^t_s\sqrt{(\nabla\Psi(\eta(\tau,w),H(\eta(\tau,w)))}\,d\tau\\&\leq&2\sqrt{2(t-s)}\left(\int^t_s(\nabla\Psi(\eta(\tau,w),H(\eta(\tau,w)))\,d\tau\right)^{\frac{1}{2}}\\&=&2\sqrt{2(t-s)}\int^s_t\frac{d}{d\tau}(\Psi(\eta(\tau,w)))\,d\tau
\\&=&2\sqrt{2(t-s)}[\Psi(\eta(s,w))-\Psi(\eta(t,w))]^{\frac{1}{2}}\\&\leq&2\sqrt{2(t-s)}\left[\Psi(w)-c\right]^{\frac{1}{2}}.
	\end{eqnarray*}
Since $T^+(w)<+\infty$, this implies that $\lim\limits_{t\rightarrow T^+(w)}\eta(t,w)$ exists and then it must be a critical point of $\Psi$ (otherwise the trajectory $t\mapsto\eta(t,w)$ could be continued beyond $T^+(w)$).

{\bf Case 2:} $T^+(w)=+\infty.$  To prove that 
$\lim\limits_{t\rightarrow +\infty}\eta(t,w)$ exists, it suffices to prove that, for any $\varepsilon>0$, there exists $t_\varepsilon>0$ such that
\begin{equation}\label{vc}
\|\eta(t_\varepsilon,w)-\eta(t,w)\|_\alpha<\varepsilon,\quad t\geq t_\varepsilon.	
\end{equation}
If (\ref{vc}) does not hold, then there exist $0<\varepsilon<\frac{1}{2}\omega(r)$ (where $\omega(r)$ is given by Lemma \ref{l88}) and $\{t_n\}\subset[0,+\infty)$ with $t_n\rightarrow+\infty$ and $\|\eta(t_n,w)-\eta(t_{n+1},w)\|_\alpha=\varepsilon$ for all $n$. Choose the smallest $t^1_n\in(t_n,t_{n+1})$ such that 
\begin{equation}\label{xa}
	\|\eta(t^1_n,w)-\eta(t_{n},w)\|_\alpha=\frac{\varepsilon}{3},\quad 0<\varepsilon<\frac{1}{2}\omega(r).
\end{equation}
 Let $\kappa_n=\min\limits_{s\in[t_n,t^1_n]}\|\nabla\Psi(\eta(s,w))\|_{T^*_w(S_1)}
.$
Then by (\ref{lc}) and (\ref{xq}), we have
\begin{eqnarray*}
	\frac{\varepsilon}{3}=\|\eta(t^1_n,w)-\eta(t_{n},w)\|_\alpha&\leq&\int^{t^1_n}_{t_n}\|H(\eta(\tau,w))\|_\alpha\,d\tau\\&\leq&2\int^{t^1_n}_{t_n}\|\nabla\Psi(\eta(\tau,w))\|_{T^*_w(S_1)}
\,d\tau\\&\leq&\frac{2}{\kappa_n}
	\int^{t^1_n}_{t_n}\|\nabla\Psi(\eta(\tau,w))\|^2_{T^*_w(S_1)}
\,d\tau\\&\leq&\frac{4}{\kappa_n}
	\int^{t^1_n}_{t_n}(\nabla\Psi(\eta(\tau,w),H(\eta(\tau,w)))\,d\tau\\&=&\frac{4}{\kappa_n}[\Psi(\eta(t_n,w))-\Psi(\eta(t^1_n,w))].
\end{eqnarray*}
Since $\Psi$ is strictly decreasing along trajectories of $\eta$ and $r=\Psi(w)$, it follows from (\ref{xa}) and Lemma \ref{l88} that $$\Psi((\eta(t_n,w)))-\Psi((\eta(t^1_n,w))\rightarrow0,\quad n\rightarrow+\infty,$$
thus $\kappa_n\rightarrow 0$, and there exist $s^1_n\in[t_n,t^1_n]$ such that $\nabla \Psi(w^1_n)\rightarrow 0$ with $w^1_n=\eta(s^1_n,w)$. Similarly, we find a largest $t^2_n\in (t^1_n, t_{n+1})$ for which $\|\eta(t_{n+1},w)-\eta(t^2_{n},w)\|_\alpha=\frac{\varepsilon}{3}$ and then $w^2_n=\eta(s^2_n,w)$ satisfying $\nabla \Psi(w^2_n)\rightarrow 0$. As $\|w^1_n-\eta((t_n,w))\|_\alpha\leq\frac{\varepsilon}{3}$ and $\|w^2_n-\eta((t_{n+1},w))\|_\alpha\leq\frac{\varepsilon}{3}$, $\{w^1_n\},\{w^2_n\}$ are two $(PS)$ sequences such that
$$\frac{\varepsilon}{3}\leq \|w^1_n-w^2_n\|_\alpha<2\varepsilon<\omega(r).$$
However, this contradicts Lemma \ref{l88}. Hence (\ref{vc}) holds, and thus $\lim\limits_{t\rightarrow +\infty}\eta(t,w)$ exists. Clearly, it must be a critical point of $\Psi.$

\end{proof}

Let $P\subset S_1$, we define
\begin{equation}\label{vb}
U_{\delta}(P)=\{w\in S_1: \text{dist}(w,P)<\delta\},	
\end{equation}
where $\delta>0$ is a constant.

\begin{lm}\label{lg}
Let $r\geq c$. Then for any $\delta>0$, there exists $\varepsilon=\varepsilon(\delta)>0$ such that
\begin{itemize}
	\item [(i)] $\Psi^{r+\varepsilon}_{r-\varepsilon}\cap K=K_r$;
	\item [(ii)] $\lim\limits_{t\rightarrow T^+(w)}\Psi(\eta(t,w))<r-\varepsilon,\quad w\in\Psi^{r+\varepsilon}\backslash U_\delta(K_r).$
\end{itemize}

\end{lm}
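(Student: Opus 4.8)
The plan is to treat the two assertions by entirely different mechanisms: part (i) is a soft consequence of the discreteness of the critical values, whereas part (ii) is the genuine deformation statement and is where the separation estimates of Lemmas \ref{l98} and \ref{l88} enter. For (i), I would first note that, by the standing contradiction hypothesis (\ref{84}) that $\mathcal{H}$ is finite together with the $\mathbb{Z}^d$-invariance of $\Psi$ (Remark \ref{93}), the set of critical values $\Psi(K)=\Psi(\mathcal{H})$ is finite, hence isolated. I may therefore fix $\varepsilon_0>0$ so small that $[r-\varepsilon_0,r+\varepsilon_0]\cap\Psi(K)\subset\{r\}$. For every $\varepsilon\le\varepsilon_0$ this gives (i) immediately: the inclusion $K_r\subset\Psi^{r+\varepsilon}_{r-\varepsilon}\cap K$ is trivial, and conversely any $w\in K$ with $\Psi(w)\in[r-\varepsilon,r+\varepsilon]$ has its critical value in $[r-\varepsilon_0,r+\varepsilon_0]$, forcing $\Psi(w)=r$, i.e. $w\in K_r$.

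For (ii) I would argue by contradiction. If (ii) failed for all small $\varepsilon$, I choose $\varepsilon_n\downarrow0$ with $\varepsilon_n\le\varepsilon_0$ and points $w_n\in\Psi^{r+\varepsilon_n}\setminus U_\delta(K_r)$ with $\lim_{t\to T^+(w_n)}\Psi(\eta(t,w_n))\ge r-\varepsilon_n$. Writing $\gamma_n(t)=\eta(t,w_n)$ and invoking Lemma \ref{lv}, the limit $\bar w_n=\lim_{t\to T^+(w_n)}\gamma_n(t)$ exists and lies in $K$; since its value lies in $[r-\varepsilon_n,r+\varepsilon_n]\subset[r-\varepsilon_0,r+\varepsilon_0]$, part (i) forces $\bar w_n\in K_r$ with $\Psi(\bar w_n)=r$. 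As $\Psi$ is strictly decreasing along $\eta$, the whole trajectory stays in $\Psi^{r+\varepsilon_0}$ and the total energy drop satisfies $\Psi(w_n)-r\le\varepsilon_n\to0$.

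The heart of the matter is to extract from each trajectory two $(PS)$ sequences whose mutual distance is trapped in a forbidden range, contradicting Lemma \ref{l88}. Set $\omega_0:=\omega(r+\varepsilon_0)>0$ and $\beta:=\tfrac1{10}\min\{\delta,k,\omega_0\}$, with $k>0$ from Lemma \ref{l98}. Since $t\mapsto\text{dist}(\gamma_n(t),K_r)$ is continuous and decreases from $\ge\delta$ to $0$, I can pick nested crossing times isolating two disjoint thin annuli, one in which the distance runs through $[\tfrac53\beta,2\beta]$ and one through $[\beta,\tfrac43\beta]$; the $1$-Lipschitz property of the distance forces a spatial traversal $\ge\tfrac\beta3$ across each. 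On each annulus the pseudo-gradient inequalities (\ref{lc}) give $\int\|\nabla\Psi(\gamma_n)\|^2\,dt\le 2(\Psi(w_n)-r)\to0$, while combining the length bound with $\|H\|<2\|\nabla\Psi\|$ and Cauchy--Schwarz forces the time length to blow up; hence $\min\|\nabla\Psi(\gamma_n(\cdot))\|$ over each annulus tends to $0$, producing $(PS)$ sequences $w_n^1,w_n^2$ with $\text{dist}(w_n^1,K_r)\in[\tfrac53\beta,2\beta]$ and $\text{dist}(w_n^2,K_r)\in[\beta,\tfrac43\beta]$. The Lipschitz bound then gives $\|w_n^1-w_n^2\|_\alpha\ge\tfrac\beta3>0$. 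For the upper bound I use $2\beta<\tfrac12 k$, so by Lemma \ref{l98} there is at most one critical point within $2\beta$ of any annulus point; this nearest point is attained, unique, and, as a continuous map of the connected arc into the $k$-separated (hence discrete) set $K_r$, is a single $p_n\in K_r$, whence $\|w_n^1-w_n^2\|_\alpha\le\text{dist}(w_n^1,K_r)+\text{dist}(w_n^2,K_r)\le\tfrac{10}3\beta<\omega_0$. Thus $\tfrac\beta3\le\|w_n^1-w_n^2\|_\alpha<\omega_0$ for two $(PS)$ sequences in $\Psi^{r+\varepsilon_0}$, contradicting the dichotomy of Lemma \ref{l88}; this yields (ii) for all sufficiently small $\varepsilon$, and one takes the minimum of the two thresholds.

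The main obstacle is exactly the simultaneous control of the two bounds on $\|w_n^1-w_n^2\|_\alpha$: the distance must be kept bounded below (to rule out convergence to zero) yet strictly below $\omega_0$ (to rule out the large-gap alternative of Lemma \ref{l88}), and the only device available for the upper bound is the uniform separation $k$ of Lemma \ref{l98}, which is why the annuli must be taken of width $\ll k$. Choosing $\beta$ small against all of $\delta,k,\omega_0$ reconciles these constraints; the remaining verifications --- continuity and attainment of the nearest-point map inside a thin annulus, and that the two extracted sequences are genuine $(PS)$ sequences of $\Psi$ --- are routine.
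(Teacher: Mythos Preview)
Your argument is correct, but the route for (ii) differs from the paper's. You argue by contradiction with a sequence $\varepsilon_n\downarrow0$: on each trajectory, whose total energy drop is at most $\varepsilon_n$, you pick two thin annular crossings, extract from each a point where $\|\nabla\Psi\|$ is small (via the length--energy inequality), and obtain two $(PS)$ sequences in $\Psi^{r+\varepsilon_0}$ whose mutual distance is trapped in $[\tfrac{\beta}{3},\tfrac{10}{3}\beta]\subset(0,\omega(r+\varepsilon_0))$, contradicting Lemma~\ref{l88} directly. This is precisely the mechanism the paper uses for Lemma~\ref{lv}, transplanted here.

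The paper instead first reduces to $\delta<\omega(r+1)$, then proves a \emph{uniform} gradient lower bound $\tau>0$ on the single annulus $U_\delta(K_r)\setminus U_{\delta/2}(K_r)$ (using Lemma~\ref{l88} only to show $\tau>0$, via a constant $(PS)$ sequence $v_n^2\to w_0\in K_r$), and then obtains an explicit threshold $\varepsilon<\tfrac{\delta\tau^2}{8A}$ by a standard energy-crossing estimate for a \emph{single} trajectory. The paper's argument is thus more quantitative and closer to a classical deformation lemma, while yours is softer (only existence of $\varepsilon$) but economical in that it reuses the Lemma~\ref{lv} template and invokes Lemma~\ref{l88} in one clean stroke. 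One point worth making explicit in your write-up: to ensure the nearest-point map is defined and constant on the arc between $w_n^1$ and $w_n^2$, you should take both crossing intervals on the tail $\{t\ge T_n^*\}$ where $T_n^*$ is the last time the distance to $K_r$ equals $2\beta$; this keeps the whole arc within distance $2\beta<k/2$ of $K_r$ and makes the discreteness argument go through.
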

\begin{proof}
By (\ref{84}), (i) is true for $\varepsilon>0$ small enough. Without loss of generality, we may assume that $U_\delta(K_r)\subset \Psi^{r+1}$ and $\delta<\omega(r+1)$.
Let 
\begin{equation}\label{xd}
\tau=\inf\{\|\nabla\Psi(w)\|_{T^*_w(S_1)}
: w\in U_\delta(K_r)\backslash U_{\frac{\delta}{2}}(K_r) \}.	
\end{equation}
We claim that $\tau>0.$	 In fact, by contradiction, suppose that there exists a sequence $\{v^1_n\}\subset U_\delta(K_r)\backslash U_{\frac{\delta}{2}}(K_r)$ such that $\nabla \Psi(v^1_n)\rightarrow 0$. Passing to a subsequence, by (\ref{84}) and the $\mathbb{Z}^d-$invariance of $\Psi$, we may assume that $\{v^1_n\}\subset U_\delta(w_0)\backslash U_{\frac{\delta}{2}}(w_0)$ for some $w_0\in K_r.$  Let $v^2_n\rightarrow w_0$, then $\nabla\Psi(v^2_n)\rightarrow 0$ and 
$$\frac{\delta}{2}\leq \underset{n\rightarrow+\infty}{\overline{\lim}}\|v^1_n-v^2_n\|_\alpha\leq \delta<\omega(r+1),$$
	which contradicts Lemma \ref{l88}. Hence $\tau>0.$ Let
	\begin{equation}\label{xf}
	A=\sup\{\|\nabla\Psi(w)\|_{T^*_w(S_1)}
: w\in U_\delta(K_r)\backslash U_{\frac{\delta}{2}}(K_r) \}.	
	\end{equation}
Choose $\varepsilon<\frac{\delta\tau^2}{8A}$	 such that (i) holds.  By Lemma \ref{lv} and (i), the only way (ii) can not be true is that
\begin{equation}\label{70}
	\eta(t,w)\rightarrow\tilde{w}\in K_r,\quad t\rightarrow T^+(w),\quad \text{for~some~}w\in \Psi^{r+\varepsilon}\backslash U_\delta(K_r).
\end{equation}
In this case, let
\begin{eqnarray*}
	\begin{array}{ll}
		t_1=\sup\{t\in[0,T^+(w)): \eta(t,w)\notin U_\delta(\tilde{w})\},\\
		t_2=\inf\{t\in(t_1,T^+(w)): \eta(t,w)\in U_{\frac{\delta}{2}}(\tilde{w})\}.
			\end{array}
\end{eqnarray*}	
Then by (\ref{lc}), (\ref{xq})	and (\ref{xf}), we have
\begin{eqnarray*}
	\frac{\delta}{2}=\|\eta(t_1,w)-\eta(t_2,w)\|_\alpha&\leq&\int^{t_2}_{t_1}\|H(\eta(s,w))\|_\alpha\,ds\\&\leq&2\int^{t_2}_{t_1}\|\nabla\Psi(\eta(s,w))\|_{T^*_w(S_1)}
\,ds\\&\leq&2A(t_2-t_1).
\end{eqnarray*}
By (\ref{lc}), (\ref{xq}), (\ref{xd}) and the inequality above, one gets that
\begin{eqnarray*}
	\Psi(\eta(t_2,w))-\Psi(\eta(t_1,w))&=&-\int^{t_2}_{t_1}(\nabla\Psi(\eta(s,w)),H(\eta(s,w))\,ds\\&\leq&-\frac{1}{2}\int^{t_2}_{t_1}\|\nabla\Psi(\eta(s,w))\|^2_{T^*_w(S_1)}\,ds\\&\leq&-\frac{1}{2}\tau^2(t_2-t_1)\\&\leq&-\frac{\delta \tau^2}{8A}.
\end{eqnarray*}
Hence $\Psi(\eta(t_2,w))\leq r+\varepsilon-\frac{\delta \tau^2}{8A}<r$. As a consequence, $\eta(t,w)\not\rightarrow \tilde{w}$, which contradicts (\ref{70})	.
	
\end{proof}

{\bf Proof of Theorem \ref{t-1} :}
 Let $$\Sigma=\{A\subset S_1: A=-A=\bar{A}\}.$$
For any $A\in\Sigma, \gamma(A)=j$ denotes the usual Krasnoselskii genus of $A$ to be $j$ (see \cite{R}), namely, the smallest integer
$j$ for which there exists an odd mapping $A\rightarrow\mathbb{R}^j\backslash\{0\}$.
In particular, if there does not exist a finite $j$, we set $\gamma(A)=+\infty$. Moreover, we set $\gamma(\emptyset)=0.$

For the usual Krasnoselskii genus, let $A$ and $B$ are closed and symmetric
subsets, then we have the following properties.
\begin{itemize}
	\item [(1)] Mapping property: if there exists an odd map $f\in C(A,B)$, then $\gamma(A)\leq\gamma(B)$.
	\item [(2)] Monotonicity property: if $A\subset B$, then $\gamma(A)\leq\gamma(B)$.
	\item [(3)] Sub-additivity: $\gamma(A\cup B)\leq\gamma(A)+\gamma(B).$
	\item [(4)] Continuity property: if $A$ is compact, then $\gamma(A)<+\infty$ and there is a $\delta>0$
such that $\overline{U_\delta(A)}$ is a closed and symmetric subset, and $\gamma(\overline{U_\delta(A)})=\gamma(A)$,
where $U_\delta(\cdot)$ is defined in (\ref{vb}).
\end{itemize}

Now we consider the nondecreasing sequence of Lusternik-Schnirelman values
for $\Psi$ defined by
$$c_l=\inf\{r\in\mathbb{R}: \gamma(\Psi^r)\geq l, l\in\mathbb{N}\}.$$
Clearly, $c_l\leq c_{l+1}$. We claim that
\begin{equation}\label{ii}
K_{c_l}\neq\emptyset,\quad\text{and}\quad c_l<c_{l+1},\quad l\in\mathbb{N}.
\end{equation}
Indeed, by Lemma \ref{l98}, one sees that $\gamma(K_r)=0$ or $1$  with $r=c_l$ (depending on whether $K_r$ is empty or not.) By the continuity property (4) of the genus, there exists $\delta>0$ such that
\begin{equation}\label{im}
\gamma(\overline{U})=\gamma(K_r)	,
\end{equation}
where $U=U_\delta(K_r)$ and $\delta<\frac{k}{2}$. Choose $\varepsilon=\varepsilon(\delta)>0$ such that Lemma \ref{lg} holds, then for any $w\in \Psi^{r+\varepsilon}\backslash U$, there exists $t\in[0,T^+(w))$ such that $\Psi(\eta(t,w))<r-\varepsilon$. Hence, we can define the entrance map
$$e:\Psi^{r+\varepsilon}\backslash U\rightarrow[0,+\infty),\quad e(w)=\inf\{t\in[0,T^+(w) ): \Psi(\eta(t,w))<r-\varepsilon\},$$
which satisfies $e(w)<T^+(w)$ for $w\in\Psi^{r+\varepsilon}\backslash U$. By Lemma \ref{lg}, one sees that $(r-\varepsilon)$ is not a critical point value of $\Psi$. By $f(x,-t)=-f(x,t)$ for all $(x,t)\in\mathbb{Z}^d\times\mathbb{R}$, (\ref{xq}) and the definition of $\Psi$, we get that $e$ is a continuous and even map. Hence by (\ref{xq}), we have
$$h: \Psi^{r+\varepsilon}\backslash U\rightarrow \Psi^{r-\varepsilon},\quad h(w)=\eta(e(w),w)$$ is odd and continuous. Therefore, by the properties (1)-(3) of the genus and the definition of $r=c_l$, we have
$$\gamma(\Psi^{r+\varepsilon})-\gamma(\overline{U})\leq \gamma(\Psi^{r+\varepsilon}\backslash U)\leq \gamma(\Psi^{r-\varepsilon})\leq l-1,$$
then by (\ref{im}), we get that
$$\gamma(\Psi^{r+\varepsilon})\leq \gamma(K_r)+l-1,$$
namely,
$$\gamma(K_r)\geq \gamma(\Psi^{r+\varepsilon})-(l-1).$$
It follows from the definition of $r=c_l$ and of $c_{l+1}$ that $\gamma(K_r)\geq 1$ if $c_{l+1}>c_l$ and $\gamma(K_r)>1$ if $c_{l+1}=c_l$. Note that $\gamma(\mathcal{H})=\gamma(K_r)\leq 1$, hence (\ref{ii}) holds.

Therefore, (\ref{ii}) implies that there is an infinite sequence $\{\pm w_l\}$ of pairs of
geometrically distinct critical points of $\Psi$ with $\Psi(w_l)=c_l$, which contradicts (\ref{84}). Therefore, the equation (\ref{01}) admits infinitely many pairs of geometrically distinct solutions.

\qed


\
\


\end{document}